\renewcommand\thepart{\Roman{part}}
\def\Amp{\mathrm{Amp}}
\def\rank{\mathrm{rank}}
\def\Todd{\mathrm{Todd}}
\def\part#1{%
  \vskip .02\vsize 
  \refstepcounter{part}
  \addcontentsline{toc}{part}{Part \thepart:\ #1}
  {\centering\large \textbf{Part \thepart}. #1\par}%
  \vskip .01\vsize
}
\newcommand{\Hom}{\operatorname{Hom}}
\newcommand{\Pos}{\operatorname{Pos}}
\newcommand{\Hilb}{\operatorname{Hilb}}
\theoremstyle{plain}
\newtheorem{theorem}{Theorem}[section]
\newtheorem*{theorem*}{Theorem}
\newtheorem{lemma}[theorem]{Lemma}
\newtheorem{proposition}[theorem]{Proposition}
\newtheorem*{definition*}{Definition}
\numberwithin{equation}{section} \setcounter{tocdepth}{1}
\def\ifdraft{\ifdim\overfullrule>\z@
  \expandafter\@firstoftwo\else\expandafter\@secondoftwo\fi}
\newsavebox{\ieeealgbox}
\newcommand{\N}{\mathbb{N}}
\newcommand{\Q}{\mathbb{Q}}
\newcommand{\R}{\mathbb{R}}
\def\hq{\hspace{-0.5mm}/\hspace{-0.14cm}/ \hspace{-0.5mm}}
\theoremstyle{definition}
\newtheorem{remark}[theorem]{Remark}
\newtheorem{comment2}[theorem]{Comment}
\newtheorem{example}[theorem]{Example}
\title[Moduli on higher-dimensional base manifolds]{Moduli of vector bundles on higher-dimensional\\base manifolds\\ --- \\Construction and variation}
\date{\today}
\author[D.~Greb]{Daniel Greb}
  \address{Essener Seminar f\"ur Algebraische Geometrie und Arithmetik, Fakult\"at f\"ur Mathematik, Universit\"at Duisburg-Essen, 45117 Essen, Germany}
  \email{daniel.greb@uni-due.de}
\author[J.~Ross]{Julius Ross}
  \address{Department of Pure Mathematics and Mathematical Statistics, University of Cambridge, Wilberforce Road, Cambridge, CB3 0WB, UK}
  \email{j.ross@dpmms.cam.ac.uk}
\author[M.~Toma]{Matei Toma}
\address{Institut de Math\'ematiques \'Elie Cartan, Universit\'e de Lorraine, B.P. 70239, 54506 Vandoeuvre-l\`es-Nancy Cedex,
France}
\email{matei.toma@univ-lorraine.fr}
\subjclass[2010]{14D20, 14J60, 32G13; 14L24, 16G20, 58D27.}
\keywords{Gieseker stability, variation of moduli spaces, chamber structures, boundedness, moduli of quiver representations, semistable sheaves on K\"ahler manifolds, wall-crossing, Donaldson-Uhlenbeck compactification, determinant line bundle} 
\thanks{During the preparation of \cite{GrebToma}, DG was partially supported by an ``Eliteprogramm f\"ur Postdoktorandinnen und Postdoktoranden''-scholarship of the Baden-W\"urttemberg-Stiftung. JR is supported by an EPSRC Career Acceleration Fellowship (EP/J002062/1).}
\begin{document}

\begin{abstract} We survey recent progress in the study of moduli of vector bundles on higher-dimensional base manifolds. In particular, we discuss an algebro-geometric construction of an analogue for the Donaldson-Uhlenbeck compactification and explain how to use moduli spaces of quiver representations to show that Gieseker-Maruyama moduli spaces with respect to two different chosen polarisations are related via Thaddeus-flips through other ``multi-Gieseker''-moduli spaces of sheaves. Moreover, as a new result, we show the existence of a natural morphism from a multi-Gieseker moduli space to the corresponding Donaldson-Uhlenbeck moduli space.
\end{abstract}

\maketitle

\newcommand{\Space}{\,\,\,\,\,\,\,\,\,\,\,\,\,\,\,\,}

\section{Introduction}
Moduli spaces of sheaves play a central role in Algebraic Geometry: they provide intensively studied examples of higher-dimensional varieties (e.g.,~of hyperk\"ahler manifolds), they are naturally associated with the underlying variety and can therefore be used to define fine invariants of its differentiable structure, and they have found application in numerous problems of mathematical physics.

To obtain moduli spaces in the category of schemes, it is necessary to choose a semistability condition that selects the objects for which a moduli spaces is to be constructed. A vector bundle $E$ on a (smooth) complex curve is called \emph{semistable} if for any coherent subsheaf $F \subset E$ of intermediate rank the \emph{slope} of $F$, i.e., the degree of $F$ divided by the rank of $F$, is less than or equal to the slope of $E$. Note that the Hilbert polynomial of $E$ is completely controlled by its slope once the rank is fixed. With this definition in place, a projective moduli space for semistable vector bundles of fixed topological type exists and it parametrises so-called \emph{$S$-equivalence classes} of semistable vector bundles, see \cite{Ses67}. 

Starting in dimension two, the notion of Hilbert polynomial or degree of a given vector bundle is not well-defined, but depends on a further parameter, namely the class of a line bundle $H$ in the ample cone $\mathrm{Amp}(X)$ of $X$. Fixing such an $H$, one obtains two notions of semistability: a torsion-free coherent sheaf $E$ on $X$ is called \emph{slope-semistable} (with respect to $H$) if the slope of any coherent subsheaf of intermediate rank is less than or equal to the slope of $E$, and it is called \emph{Gieseker-Maruyama-} or just \emph{Gieseker-semistable} (with respect to $H$) if the reduced Hilbert polynomial (i.e., the Hilbert polynomial divided by the rank) of any non-trivial proper coherent subsheaf is less than or equal to the reduced Hilbert polynomial of $E$. These two notions are related by the fact that the slope is essentially the first non-trivial coefficient of the reduced Hilbert polynomial, when expanded in powers of $H$. While on the one hand slope-(semi)stability turns out to be very well behaved with respect to restrictions and tensor powers, and to have a very precise differential-geometric meaning (in terms of the existence of Hermite-Einstein metrics), on the other hand it was shown by Gieseker, Maruyama, and Simpson that once the topological invariants are fixed, there exists a projective moduli space $\mathcal{M}_H$ parametrising $S$-equivalence classes of Gieseker-semistable sheaves, see \cite[Chap.~4]{Bible}. Based on these results, two important questions arise naturally:
\vspace{0.2cm}
\begin{enumerate}
 \item[\textbf{(Q1)}] Can a meaningful moduli space $\mathcal{M}^\mu_H$ for slope-semistable sheaves be constructed as a projective scheme, and if this is the case, what is its relation to the Gieseker moduli space $\mathcal{M}_H$? 
 \item[\textbf{(Q2)}] How does the moduli space $\mathcal{M}_H$ vary when $H$ varies over the ample cone of $X$?
\end{enumerate}
\vspace{0.2cm}

It is the aim of this note to summarise known results in the surface case, to explain additional problems that arise in higher dimensions, and to discuss recent progress in both directions made by the authors in \cite{GRTI}, \cite{GRTII}, and \cite{GrebToma}.

\subsection{The surface case}
In dimension two, the problems outlined above have attracted a lot of interest due to the connection with Donaldson invariants \cite{DonaldsonPolynomialInvariants}, and questions (Q1) and (Q2) have been investigated by a number of authors. From this, a rather complete geometric picture has emerged:

(Q1) Given the class of an ample divisor $H$ on a smooth projective surface $X$, a moduli space for $H$-slope-semistable sheaves has been constructed by Jun Li \cite{JunLiDonaldsonUhlenbeck} and Le Potier \cite{LePotierDonaldsonUhlenbeck} using a certain determinant line bundle on the Gieseker moduli space $\mathcal{M}_H$. Two slope-semistable sheaves give the same point in $\mathcal{M}_H^\mu$ if and only if the double dual of the graded objects $Gr$ associated with their respective Jordan-H\"older filtration are isomorphic and if additionally the $0$-cycles obtained by taking the quotient of $Gr^{**}$ by $Gr$ coincide. Using this description, Jun Li and also Morgan \cite{Mor93} were able to show that the projective scheme $\mathcal{M}_H^\mu$ admits a homeomorphic map to the gauge-theoretic Donaldson-Uhlenbeck compactification $\mathcal{M}_H^{DU}$ \cite{DK90} of the moduli space of $H$-slope-stable vector bundles, thus endowing $\mathcal{M}_H^{DU}$ with an algebraic structure. Consequently, $\mathcal{M}_H^\mu$ provides a natural link between algebraic and differential geometry.

(Q2) The ample cone of $X$ supports a locally finite chamber structure given by linear rational walls so that the notion of slope-/Gieseker-semistability (and hence the moduli space) does not change within the chambers, see \cite{Qin}. Moreover, at least when the discriminant of the sheaves under consideration is sufficiently big, moduli spaces $\mathcal{M}_{H_1}$ and $\mathcal{M}_{H2}$ corresponding to two chambers separated by a common wall are birational, see e.g.~\cite[Prop. 4.C.6]{Bible}, and the passage from one to the other can be described explicitly in the following two ways:
First, Matsuki and Wentworth \cite{MatsukiWentworth} showed that $\mathcal{M}_{H_1}$ and $\mathcal{M}_{H2}$ are connected by a finite sequence of Thaddeus-flips passing through Gieseker moduli spaces $\mathcal{M}_{H + A_k}$ of (twisted-)semistable sheaves, where $H$ is a rational polarisation lying on the wall and $A_k$ is a suitable rational ample polarisation; similar results were also obtained for example by Ellingsrud and G\"ottsche \cite{EllingsrudGoettsche}. Second, it was shown (again in the case of large discriminant) by Hu and Li \cite{HuLi} that the situation is even simpler if one first passes from the Gieseker moduli spaces $\mathcal{M}_{H_1}$ and $\mathcal{M}_{H_2}$ to the corresponding moduli spaces $\mathcal{M}_{H_1}^\mu$ and $\mathcal{M}_{H_2}^\mu$ of slope-semistable sheaves: the latter spaces admit natural birational maps to the moduli space $\mathcal{M}_{H}^\mu$ of slope-semistable sheaves corresponding to a rational polarisation $H$ lying on the common wall between the chambers containing $H_{1}$ and $H_{2}$. We emphasise that both results depend in a crucial way on the fact that the polarisation $H$ lying on the wall is rational.

\subsection{New problems appearing in higher dimensions}
On the differential-geometric side of (Q1), generalising the Donaldson-Uhlenbeck compactification to higher-dimensional base manifolds, Tian \cite{Tian00} has constructed a natural gauge-theoretic topological compactification of the moduli space of stable holomorphic vector bundles, which is known to exist as an open separated complex-analytic subspace of the space of moduli space of simple bundles \cite{KosarewOkonek} or as a gauge-theoretic moduli space~\cite{KobayashiVectorBundles}: he completes the space of Hermitian-Yang-Mills connections by allowing so-called generalised self-dual instantons at the boundary. However, while it is generally expected that for any compact K\"ahler manifold and any moduli space of stable bundles there exists a complex-analytic compactification (even one that carries a natural K\"ahler structure, see for example the discussion in \cite[Sect.~3.2]{Tel08}), it is not known whether or not the topological compactification of Tian carries a complex structure. Jun Li's algebro-geometric construction in the surface case uses the Mehta-Ramanathan Theorem, which allows one to restrict slope-semistable sheaves to hyperplane sections, and the fact that on surfaces hyperplane sections are curves, on which the notion of slope-semistability has an interpretation in terms of Geometric Invariant Theory (GIT). A priori, this strategy runs into problems in higher dimensions, as there is no analogous GIT-interpretation of slope-semistability on a hyperplane section of dimension two or higher.

In the past, there have been several attempts to extend to higher-dimensional base manifolds the results concerning (Q2) describing the chamber structure and the variation of moduli spaces for sheaves on surfaces. However, in each of these approaches fundamental problems appear: 

On the one hand, Qin immediately remarks that his definition of ``wall'' will run into problems in higher dimensions, as the walls so defined will not be locally finite inside the ample cone, see \cite[Ex.~I.2.3]{Qin}. On the other hand, Schmitt \cite{Schmitt} looks at segments between two integer ample classes inside the real span of the ample cone, and defines a finite set of walls in the segment, so that on the complement of these walls the notion of semistability does not change. However, he also gives an example \cite[Ex.~1.1.5]{Schmitt} (which upon closer inspection turns out to be generic) of a Calabi-Yau threefold with Picard number two for which at least one such wall does not contain any rational point at all! More precisely, he exhibits threefolds $X$ with Picard number equal to two carrying rank two vector bundles $E$ that are slope-stable with respect to some integral ample divisor $H_0$ and unstable with respect to some other integral ample divisor $H_1$ such that the class $H_{\lambda}:=  (1-\lambda)H_0+\lambda H_1$ for which $E$ becomes strictly semistable is irrational. This irrationality can be traced back to the fact that the slope of a fixed sheaf $E$ changes in a non-linear way as the polarisation varies over the ample cone, cf.~Sections~\ref{subsect:slopesemistability} and \ref{subsect:divisors_to_curves}. Hence, comparing to the surface case, it is a priori unclear what should replace the moduli spaces corresponding to polarisations lying on walls that were used in Hu-Li's and Matsuki-Wentworth's work on question (Q2). 

At this point, we note that classes corresponding to points on non-rational walls inside the ample cone are K\"ahler, i.e., they can be represented by K\"ahler forms $\omega$ on the underlying projective manifold $X$. To define stability with respect to such an $\omega$, for a torsion-free sheaf $E$ consider the quantity
$$p_E(m) = \frac{1}{\rank(E)}\int_X ch(E) e^{m\omega} \Todd(X),$$
where $\Todd(X)$ is the Todd class of $X$ and $ch(E)$ is the Chern character of $E$.  We say that $E$ is \emph{(Gieseker-)(semi)stable with respect to $\omega$} if for all proper coherent subsheaves $E'\subset E$ we have $p_{E'}(m) < (\le) p_E(m)$ for all $m$ sufficiently large. 

In case $\omega$ represents the first Chern class of an ample line bundle $L$, the Riemann-Roch theorem states that $p_{E}(m)$ equals $\frac{1}{\rank(E)} \chi(E\otimes L^m)$, and so this generalises the notion of Gieseker-stability from integral classes to real classes, cf.~\cite[Sect.~3.2]{Tel08}. Hence, a third question of independent interest arises while investigating (Q1) and (Q2), cf.~\cite[Rem.~1.1.6]{Schmitt}:
\vspace{0.2cm}

\begin{enumerate}
\item[\textbf{(Q3)}] Does there exist a moduli space parametrising $\omega$-Gieseker-semistable sheaves, where $\omega$ is a K\"ahler form whose class lies in $\mathrm{Amp}(X)_\mathbb{R}$, the real span of the ample cone, or even outside $\mathrm{Amp}(X)_\mathbb{R}$?
Do moduli spaces of semistable sheaves on non-algebraic compact K\"ahler manifolds exist as complex spaces?
\end{enumerate}

\subsection{Connection with recent work on counting invariants and wall-crossing in triangulated categories}
Wall-crossing phenomena for moduli spaces have been investigated in a number of special geometric situations, most prominently in the study of counting invariants for Calabi-Yau threefolds such as Donaldson-Thomas invariants, see for example \cite{DonaldsonThomas} or \cite{RichardHolomorphicCasson}. In this context, the notions of slope- and Gieseker-semistability have been extended in many different directions, and Rudakov \cite{Rudakov} was one of the first to place these in the context of general abelian categories. This has since been built on for example by Joyce's epic [\citen{JoyceI}-\citen{JoyceIV}] and other works that use this (and generalisations to triangulated categories) to understand the wall crossing formulae that govern the change of generating functions for these invariants as the stability condition varies, see also \cite{JoyceSong}.

Most of the generalised stability conditions defined in this paper fall under Joyce's definitions; however, his work does not consider (or really has use for) the geometry of the coarse moduli spaces, and instead works throughout with the relevant kinds of stacks. 

On the other hand, also in this more abstract setup it is highly instructive to investigate the change in geometry of coarse moduli spaces in more detail. For example, in \cite{StoppaThomas} the fact that two moduli spaces are related by a variation of GIT was used to prove a relationship between certain generating functions for MNOP and stable pair invariants, and in \cite{KiemLi} a master space construction is used to study the change in the degree of virtual cycles on moduli stacks of stable objects in the derived category of a projective manifold. 

The results presented in Section~\ref{subsect:MultiVariation} in particular apply to moduli of sheaves on Calabi-Yau threefolds, and it would be interesting to investigate whether they have interesting consequences in the theory of (generalised) Donaldson-Thomas invariants.

That the classical and the modern approaches interact nicely is also shown by recent work of Bertram, who reinterprets and clarifies the work of Matsuki-Wentworth in the language of Bridgeland stability conditions on the bounded derived category of the given surface, see \cite{Bertramb} and \cite{Bertram}. It is an interesting open question whether an analogous interpretation can be found for the work presented in Section~\ref{subsect:quivervariation} below.

\subsection*{Acknowledgements} Daniel Greb wants to thank the organisers of VBAC 2014, especially Alexander Schmitt, for the invitation to participate and give a talk in the conference, which took place at Freie Universit\"at Berlin in September 2014. 

The authors are grateful to Adrian Langer for pointing out and correcting an error in the original formulation of Theorem 3.2. Moreover, they want to thank Valery Alexeev for his insightful comments that are summarised in Remark~\ref{rem:Alexeev} and the referee for constructive remarks. 

\subsection*{Conventions}
For simplicity, unless mentioned otherwise, we always assume the base space $X$ of the sheaves considered to be a smooth projective variety over the complex numbers $\mathbb{C}$, although many parts of the discussion can be carried out on more general schemes. 

\section{Slope-semistability and boundedness}

\subsection{Slope-semistability}\label{subsect:slopesemistability}
While classically slope-semistability was considered as being given by the choice of an ample divisor class, our point of view is different, and we consider curve classes instead:

Let $\alpha$ be a class in $N_1(X)_{\mathbb R} = N_1(X)\otimes \mathbb{R}$, the space of $1$-cycles on $X$ modulo numerical equivalence. A coherent sheaf $E$ on $X$ is called \emph{(semi)stable with respect to $\alpha$} or simply  \emph{$\alpha$-(semi)stable} if it is torsion-free, and if additionally for any proper non-trivial coherent subsheaf $F$ of $E$ we have 
 \begin{equation}
\mu_{\alpha}(F) := \frac{c_1(F) \cdot \alpha}{\mathrm{rank}(F)} < (\leq) \frac{c_1(E) \cdot \alpha}{\mathrm{rank}(E)} = \mu_{\alpha}(E).\label{def:slopecurveclass}
\end{equation}
The quantity $\mu_{\alpha}(F)$ is called the \emph{slope} of $F$ with respect to $\alpha$. For the notion of semistability to have reasonable properties, some positivity of $\alpha$ must be assumed (for instance it should at least be movable in the sense of \cite{BDPP}).  For us, the following special cases will be important:
\begin{enumerate}
\item $\alpha = [H]^{n-1}$, where $[H]\in N^{1}(X)$ is an integer ample class and $n=\dim X$.  Then, $\mu_{\alpha}$ is the slope in the classical sense, and we refer to this as \emph{$H$-(semi)stability}.
\item More generally, we could take $\alpha = [\omega]^{n-1}$ where $[\omega]$ is a K\"ahler class in $H^{1,1}(X, \mathbb{R})$, which we shall refer to as \emph{$\omega$-(semi)stability}.
\item We are also interested in the case $\alpha = [H_{1}].[H_{2}].\cdots.[H_{n-1}]$, where the $[H_{i}]$ are (integral) ample classes in $N^{1}(X)$, the space of divisors moduli numerical equivalence, and will refer to this as (semi)stability with respect to the \emph{multipolarisation} $(H_{1},\ldots,H_{n-1})$.  
\end{enumerate}
In all of these cases, a semistable sheaf will be called \emph{polystable} if it is a direct sum of stable sheaves.

The study of semistability with respect to a multipolarisation was introduced in \cite{Miyaoka}, and has later been extended to include a discussion of semistability with respect to arbitrary movable curve classes \cite{CaPe11}. Semistability (in this sense) of the tangent sheaf of a projective manifold or a projective variety with singularities as they appear in the Minimal Model Program has played an important role in higher-dimensional classification theory; in addition to the papers already mentioned see for example \cite{KebekusSolaCondeToma} or \cite{bbdecomp}. For more information we refer the reader to \cite{GKP14}, where some foundational results about semistable sheaves are established in this general context. 

\subsection{From divisors to curves}\label{subsect:divisors_to_curves}
Many previous efforts to study the variation of slope-stability consider the case (1) where $\alpha = [H]^{n-1}$ and  consider the change of semistabilty as $[H]$ varies over the real span $\mathrm{Amp}(X)_\mathbb{R}$ of the ample cone inside $N^1(X)_{\mathbb{R}} = N^1(X)\otimes \mathbb{R}$. Looking at the defining equation \eqref{def:slopecurveclass}, it becomes clear that in case $\dim X \geq 3$ the slope $\mu_{[H]}(E)$ of a fixed sheaf $E$ changes in a non-linear way with $[H]\in \mathrm{Amp}(X)_\mathbb{R}$, leading to the pathologies of Schmitt previously discussed.  In the following paragraphs, we explain how this problem can be resolved if one considers curve classes rather than divisor classes. As a first indication in this direction, one immediately notices that the slope $\mu_\alpha(E)$ of a given sheaf $E$ varies linearly with $\alpha \in N_1(X)_\mathbb{R}$, i.e., the non-linearity issue encountered before disappears.

The first crucial step is to realise that by looking at curve classes one does not lose information concerning divisor classes. For this, we consider the natural map
\begin{equation*}
 p_{n-1}: \mathrm{Amp}(X)_\mathbb{R} \to N_1(X)_{\mathbb R} \quad     \alpha \mapsto \alpha^{n-1}
\end{equation*}
and let $\mathrm{Pos}(X)_\mathbb{R}$ be the image of $p_{n-1}$ in $N_1(X)_{\mathbb R}$. We note that $\mathrm{Pos}(X)_{\mathbb R}$ is a generally non-convex, real cone contained in the interior of the movable cone. We call it the \emph{positive cone} of $X$.

\begin{proposition}[Injectivity of power maps]\label{P}
The set $\mathrm{Pos}(X)_{\mathbb R}$ is open in $N_1(X)_{\mathbb R}$, and the map $p_{n-1}\colon~\alpha\mapsto \alpha^{n-1}$ is a homeomorphism from $\mathrm{Amp}(X)_\mathbb{R}$ to $\mathrm{Pos}(X)_\mathbb{R}$.
\end{proposition}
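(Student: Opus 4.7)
The plan is to split the argument into three steps: global injectivity of $p_{n-1}$ on $\mathrm{Amp}(X)_{\mathbb{R}}$, local invertibility of its differential, and a short topological argument combining the two.

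For injectivity, I would invoke the sharp form of the Khovanskii-Teissier inequality: for any two ample classes $\alpha,\beta \in N^1(X)_{\mathbb{R}}$,
\[
(\alpha^{n-1}\cdot\beta)^n \;\geq\; (\alpha^n)^{n-1}(\beta^n),
\]
with equality if and only if $\alpha$ and $\beta$ are numerically proportional. Assuming $\alpha^{n-1}=\beta^{n-1}$ in $N_1(X)_{\mathbb{R}}$ and intersecting once with $\alpha$ and once with $\beta$ yields $\alpha^n = \alpha\cdot\beta^{n-1}$ and $\beta^n = \beta\cdot\alpha^{n-1}$. Substituting these identities into the Khovanskii-Teissier inequality applied to both $(\alpha,\beta)$ and the reversed pair forces $\alpha^n = \beta^n$ together with equality in Khovanskii-Teissier, so $\alpha = c\beta$ for some $c>0$. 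The relation $\alpha^{n-1}= c^{n-1}\beta^{n-1}= \beta^{n-1}$ then fixes $c=1$, hence $\alpha=\beta$.

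For local invertibility, I compute the differential of $p_{n-1}$ at $\alpha$ to be the linear map $h \mapsto (n-1)\,\alpha^{n-2}\cdot h$ from $N^1(X)_{\mathbb{R}}$ to $N_1(X)_{\mathbb{R}}$. Choosing a K\"ahler form $\omega$ representing $\alpha$, the Hard Lefschetz theorem says that cup-product with $\omega^{n-2}$ is an isomorphism $H^{1,1}(X,\mathbb{R})\to H^{n-1,n-1}(X,\mathbb{R})$; since this isomorphism preserves algebraic classes, it restricts to a linear map $N^1(X)_{\mathbb{R}}\to N_1(X)_{\mathbb{R}}$, which is injective. Because $N^1(X)_{\mathbb{R}}$ and $N_1(X)_{\mathbb{R}}$ have the same (finite) dimension, being dual under the intersection pairing, this restricted map is an isomorphism.

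Combining the two steps, the inverse function theorem shows that $p_{n-1}$ is a local homeomorphism at every $\alpha\in\mathrm{Amp}(X)_{\mathbb{R}}$. Therefore $\mathrm{Pos}(X)_{\mathbb{R}} = p_{n-1}(\mathrm{Amp}(X)_{\mathbb{R}})$ is open in $N_1(X)_{\mathbb{R}}$, and the global injectivity upgrades the local homeomorphism property to a genuine homeomorphism onto the image. The main obstacle is the injectivity step: the sharp equality case of Khovanskii-Teissier is the genuinely nontrivial input, and one would either appeal to the standard treatment (e.g.\ Lazarsfeld's \emph{Positivity in Algebraic Geometry}) or reduce to the surface case via repeated hyperplane sections combined with the Hodge index theorem.
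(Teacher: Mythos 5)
Your overall strategy coincides with the paper's: the paper gives no details beyond saying that the proof in the cited reference is ``based on the Khovanskii--Teissier inequalities and the Hodge Index Theorem'', and these are exactly your two inputs --- the sharp Khovanskii--Teissier inequality for global injectivity, and a Hodge-theoretic non-degeneracy statement combined with the inverse function theorem for openness. The injectivity step is correct as written, granted the proportionality characterisation of equality in
\[
(\alpha^{n-1}\cdot\beta)^n \;\ge\; (\alpha^n)^{n-1}(\beta^n),
\]
which for ample classes is Teissier's proportionality theorem; you are right that this is the genuinely nontrivial input, and deferring to the literature for it is reasonable.

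The one step that needs repair is the local one. You deduce injectivity of $h\mapsto \alpha^{n-2}\cdot h$ on $N^1(X)_{\mathbb R}$ by restricting the Hard Lefschetz isomorphism $H^{1,1}(X,\mathbb R)\to H^{n-1,n-1}(X,\mathbb R)$. For the restriction of an injective map to be injective on $N^1(X)_{\mathbb R}$ \emph{as a map to} $N_1(X)_{\mathbb R}$, you also need the cycle class map $N_1(X)_{\mathbb R}\to H^{2n-2}(X,\mathbb R)$ to be injective, i.e.\ that numerical and homological equivalence agree for $1$-cycles. This is true, but it is itself a small theorem rather than a formality: a priori a numerically trivial curve class could pair nontrivially with transcendental $(1,1)$-classes. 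The cleaner argument --- and the one the phrase ``Hodge Index Theorem'' in the paper points to --- works entirely inside $N^1(X)_{\mathbb R}$: if $\alpha^{n-2}\cdot h$ is numerically trivial, then in particular $\alpha^{n-1}\cdot h=0$ and $\alpha^{n-2}\cdot h^2=0$; since the quadratic form $x\mapsto \alpha^{n-2}\cdot x^2$ on $N^1(X)_{\mathbb R}$ has signature $(1,\rho-1)$ with $\alpha$ spanning a positive direction, it is negative definite on the orthogonal complement of $\alpha$, whence $h=0$. Surjectivity then follows from your dimension count, and the rest of your argument (inverse function theorem, then local homeomorphism plus global injectivity gives a homeomorphism onto an open image) goes through unchanged.
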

The rather elementary proof given in \cite[Sect.~6.3.1]{GrebToma} is based on the Khovanskii-Teissier inequalities and the Hodge Index Theorem. 

\subsection{Boundedness}

The second crucial step in our approach is to obtain boundedness results for families of sheaves that are semistable with respect to curve classes. The proof of the classical case $\alpha = [H]^{n-1}$ of this result can be found for example in \cite[Thm.~3.3.7]{Bible}, and uses in an essential way that $H$ is integral so one is able to take hyperplane sections. When considering curve classes instead of divisors, new ideas are needed, as one encounters the following problems:
\begin{enumerate}
 \item To study variation of slope stability one needs boundedness of the set of sheaves (with given topological type) that are semistable with respect to a curve class that is allowed to vary in some given compact subset $K\subset N_{1}(X)_{\mathbb R}$.  Even if we restrict to $K$ being a polyhedral subset of $\Pos(X)_{\mathbb R}$, such a set will contain points that are not of the form $[H]^{n-1}$ for any ample ($\mathbb Q$)-class $H$.

\item As the slope is linear in $N_{1}(X)_{\mathbb R}$, it is natural to consider \emph{convex} subsets $K\subset N_{1}(X)_{\mathbb R}$.  So, even if originally interested in $\Pos(X)_{\mathbb R}$, we are naturally led to study its convex hull, which will contain points that are far away from being of the form $[H]^{n-1}$ for some ample class $[H]\in \mathrm{Amp}(X)_\mathbb{R}$.
\end{enumerate}

The first issue is dealt with by the following result \cite[Thm.~6.8]{GRTI}:
\begin{theorem}[Boundedness I]\label{thm:K^+}
 Let $X$ be a smooth $n$-dimensional projective variety over an algebraically closed field $k$ (of arbitrary characteristic). Let $K \subset \mathrm{Pos}(X)_\mathbb{R}$ be a compact subset. Then, the set of torsion-free sheaves $E$ with fixed Chern classes that are slope-semistable with respect to some class in $K$ is bounded. 
\end{theorem}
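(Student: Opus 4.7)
The plan is to reduce to the classical boundedness theorem via Proposition~\ref{P} and a uniformization argument over the ample cone. Since $p_{n-1}\colon \mathrm{Amp}(X)_{\mathbb R}\to\mathrm{Pos}(X)_{\mathbb R}$ is a homeomorphism, the preimage $K':=p_{n-1}^{-1}(K)$ is compact inside $\mathrm{Amp}(X)_{\mathbb R}$, so it suffices to prove boundedness of the family of torsion-free sheaves $E$ with the prescribed Chern classes that are slope-semistable with respect to $[H]^{n-1}$ for some $[H]\in K'$.

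I would then fix an integer ample reference class $H_0$ (for instance, one chosen close to $K'$) and invoke Kleiman's boundedness criterion together with the Le Potier--Simpson estimate, which in arbitrary characteristic is available thanks to Langer's work. This reduces the problem to proving a uniform upper bound on the maximal slope $\mu_{H_0^{n-1},\max}(E)$ over the family. To prove such a bound, for a subsheaf $F\subset E$ of intermediate rank set $\Delta_F := c_1(F)/\rank(F)-c_1(E)/\rank(E) \in N^1(X)_{\mathbb Q}$; the semistability hypothesis gives $\Delta_F \cdot [H]^{n-1}\leq 0$ for some $[H]\in K'$, while the desired bound says $\Delta_F\cdot H_0^{n-1}$ is bounded above. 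The idea is to exploit compactness of $K'$ and the openness of the ample cone to interpolate between $[H]^{n-1}$ and $H_0^{n-1}$, controlling the slope change in terms of uniform intersection-theoretic data attached to $K'$. Combined with a Langer-type Bogomolov--Gieseker inequality that constrains the discriminants of possible destabilising subsheaves, this should pin down the admissible $\Delta_F$ to a bounded region and produce the desired uniform bound on $\Delta_F \cdot H_0^{n-1}$.

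The main obstacle is precisely this last step: transferring semistability information from a possibly irrational curve class $[H]^{n-1}$ to an integral reference class $H_0^{n-1}$. Classical proofs of boundedness such as \cite[Thm.~3.3.7]{Bible} rely crucially on the Mehta--Ramanathan restriction theorem, which requires the polarisation to be rational in order to take hyperplane sections inside a linear system. One therefore has to work intrinsically on $X$ and adapt Langer's discriminant-based approach so that all estimates are uniform over the compact parameter space $K'$, which constitutes the principal technical work of the proof.
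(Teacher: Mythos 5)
Your reduction is the right one, and it matches the paper's: via Kleiman's boundedness criterion and the Le Potier--Simpson/Langer estimates, everything comes down to a uniform upper bound on $\mu_{H_0^{n-1},\max}(E)$ for a fixed integral ample class $H_0$, and the real issue is indeed transferring the inequalities $\Delta_F\cdot\alpha\le 0$ (valid for a possibly irrational class $\alpha\in K$) into a bound on $\Delta_F\cdot H_0^{n-1}$. But the step where you ``interpolate \dots controlling the slope change in terms of uniform intersection-theoretic data'' and assert that a Bogomolov--Gieseker inequality ``should pin down'' the admissible $\Delta_F$ is precisely the content of the theorem, and as written it has no foothold: knowing $\Delta_F\cdot\alpha\le 0$ for a single class $\alpha$ gives no control on $\Delta_F\cdot H_0^{n-1}$ unless one also controls quantities of the form $\Delta_F^2\cdot(\cdot)$, and Bogomolov-type inequalities cannot be applied to an arbitrary destabilising subsheaf $F\subset E$ --- they require $F$ (and $E/F$) to be semistable with respect to \emph{something}. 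Nothing in your set-up produces such a semistable object. Note also that pulling $K$ back to $K'\subset\mathrm{Amp}(X)_\mathbb{R}$ and invoking ``openness of the ample cone'' points you in the wrong direction: in $N^1(X)_\mathbb{R}$ the slope is non-linear in the polarisation, which is exactly the pathology the passage to curve classes is designed to avoid.

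The paper supplies the missing mechanism with a ``wall-crossing'' lemma exploiting the linearity of $\alpha\mapsto\mu_\alpha(F)$ on $N_1(X)_\mathbb{R}$: along the straight segment $\alpha_t=(1-t)\alpha+t[H_0]^{n-1}$ of \emph{curve} classes, a torsion-free sheaf that is slope-semistable at $t=0$ but not at $t=1$ must be \emph{properly} semistable with respect to some $\alpha_t$, $t\in[0,1)$. At such a point there is a subsheaf $F$ with $\mu_{\alpha_t}(F)=\mu_{\alpha_t}(E)$ and with $F$ and $E/F$ both $\alpha_t$-semistable; this is what licenses Langer's characteristic-free Bogomolov inequality, and the Hodge Index Theorem --- which your sketch never invokes, and which is where the smoothness of $X$ enters --- then converts the resulting discriminant bounds, together with the compactness of $K$, into the desired uniform bound on $\mu_{H_0^{n-1},\max}(E)$, reducing the statement to the classical boundedness theorem for the integral polarisation $H_0$. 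Without the wall-crossing lemma (or an equivalent device producing $\alpha_t$-semistable pieces) and the Hodge-index step, the interpolation you propose cannot be carried out.
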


The idea of the proof is to connect a class $\alpha \in \mathrm{Pos}(X)_\mathbb{R}$ to a class of the form $[H]^{n-1}$, where $H$ is a integral ample divisor, and by analysing the change of semistabilty along the resulting segment using the following lemma (``proving boundedness by wall-crossing''):
\begin{lemma}
Let $\alpha$ be any class in $N_1(X)_{\mathbb R}$ and $H \in\Amp(X)_\Q$. If a torsion-free sheaf $E$ is slope-semistable with respect to $\alpha$ but not
 with respect to $H$, then $E$ is properly semistable with respect to a class $\alpha_t:=(1-t)\alpha+t[H]^{n-1}$, for some $t\in{[0,1)}$.
\end{lemma}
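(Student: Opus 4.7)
The plan is to reduce the problem to a one-dimensional intermediate-value argument along the segment $\{\alpha_t\}_{t\in[0,1]}$ and to use classical boundedness for the integral polarisation $H$ to make the set of potential destabilisers finite.

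First, for every saturated coherent subsheaf $F\subset E$ of intermediate rank I would consider
\[
 g_F(t) := \mu_{\alpha_t}(F) - \mu_{\alpha_t}(E) = (1-t)\bigl(\mu_\alpha(F)-\mu_\alpha(E)\bigr) + t\bigl(\mu_{[H]^{n-1}}(F)-\mu_{[H]^{n-1}}(E)\bigr),
\]
which is affine in $t$ since slopes are linear in the numerical curve class. The hypothesis of $\alpha$-semistability gives $g_F(0)\le 0$ for every such $F$, whereas the failure of $H$-semistability yields at least one $F_0$ with $g_{F_0}(1) > 0$.

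Next, I would restrict attention to those $F$ which actually destabilise $E$ for some $t\in[0,1]$. If $g_F(t) > 0$ for some such $t$, then, since $g_F$ is affine with $g_F(0)\le 0$, it is strictly increasing; in particular $g_F(1) > 0$, so $F$ is already $H$-destabilising. By classical boundedness of the family of saturated $H$-destabilising subsheaves of the fixed torsion-free sheaf $E$ (see, e.g., \cite[Lem.~1.7.9 and the following discussion]{Bible}), this family is bounded. Since the first Chern class is locally constant on the relevant Quot scheme and since a bounded family has only finitely many connected components, only finitely many pairs $(\rk(F), c_1(F))\in \mathbb{Z}\times N^1(X)$ occur. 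Because $g_F$ depends on $F$ only through this numerical data, only finitely many distinct affine functions $g_F$ arise among the candidate destabilisers.

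For each such function I let $t_F \in [0,1)$ be its unique zero (which exists since $g_F(0)\le 0 < g_F(1)$), and set $t^* := \min_F t_F$, the minimum being attained by finiteness. Then $t^* \in [0,1)$, and for the minimiser $F^*$ we have $\mu_{\alpha_{t^*}}(F^*) = \mu_{\alpha_{t^*}}(E)$. For any other saturated subsheaf $F'$ of intermediate rank: either $F'$ is not a candidate destabiliser and $g_{F'}(t^*)\le 0$ by definition, or $t^* \le t_{F'}$ and the monotonicity of $g_{F'}$ yields $g_{F'}(t^*)\le g_{F'}(t_{F'}) = 0$. Hence $E$ is $\alpha_{t^*}$-semistable, and $F^*$ witnesses that it is not $\alpha_{t^*}$-stable. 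The main obstacle is the finiteness step: extracting from boundedness of the $H$-destabilising family the finiteness of the set of Chern classes $c_1(F)\in N^1(X)$ that occur. This is precisely what is available for an integral ample $H$ but not a priori for the curve class $\alpha$, and it explains why the segment is set up to end at $[H]^{n-1}$ rather than at an arbitrary class.
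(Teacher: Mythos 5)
The survey itself does not reproduce a proof of this lemma (it is imported from the proof of \cite[Thm.~6.8]{GRTI}), but your argument is exactly the route indicated there: slopes are affine along the segment $t\mapsto\alpha_t$, any subsheaf that destabilises somewhere on the segment already destabilises at $t=1$ because the corresponding affine function is then forced to be increasing, Grothendieck's lemma for the \emph{integral} polarisation $H$ makes the set of numerical invariants of such subsheaves finite, and the minimal wall-crossing parameter $t^*$ then witnesses proper semistability. You also correctly identify why the segment must terminate at $[H]^{n-1}$ rather than at a general curve class.

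There is one genuine gap, namely the silent reduction to \emph{saturated} subsheaves. The definition \eqref{def:slopecurveclass} quantifies over all proper non-trivial subsheaves, and the lemma allows $\alpha$ to be an arbitrary class in $N_1(X)_{\mathbb R}$, so for $t<1$ the class $\alpha_t$ need not pair non-negatively with effective divisor classes. Since $c_1(F^{\mathrm{sat}})-c_1(F)=c_1(F^{\mathrm{sat}}/F)$ is effective, the inequality $\mu_{\alpha_t}(F)\le\mu_{\alpha_t}(F^{\mathrm{sat}})$ is only guaranteed when $\alpha_t$ lies in the closed movable cone; in general a non-saturated subsheaf can have strictly larger $\alpha_t$-slope than its saturation, so your verification that $E$ is $\alpha_{t^*}$-semistable covers only part of the test objects, while your finiteness step (via \cite[Lem.~1.7.9]{Bible}) is stated only for saturated ones. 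The gap is repairable: any subsheaf $F$, saturated or not, that destabilises somewhere on the segment satisfies $\mu_H(F)>\mu_H(E)$, hence $c_1(F)=c_1(F^{\mathrm{sat}})-[T]$ with $[T]$ effective and $0\le [T]\cdot [H]^{n-1}< c_1(F^{\mathrm{sat}})\cdot[H]^{n-1}-\rk(F)\,\mu_H(E)$; as $F^{\mathrm{sat}}$ ranges over a bounded family and effective divisor classes of bounded $H$-degree likewise form a bounded family, the set of numerical pairs $(\rk F, c_1(F))$ occurring is still finite, and your minimisation goes through verbatim over this larger finite collection of affine functions. Alternatively, in the intended application to Theorem~\ref{thm:K^+} one has $\alpha\in\Pos(X)_{\mathbb R}$, which lies in the interior of the movable cone, so the entire segment does as well and the saturation issue disappears; but for the lemma as stated, with $\alpha$ arbitrary, the extra step is needed.
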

From this, the proof of Theorem \ref{thm:K^+} proceeds by bounding the slope of destabilising subsheaves with the help of the Hodge Index Theorem and Langer's version of the Bogomolov's inequality, reducing the problem to the classical boundedness result for $[H]^{n-1}$.    In fact, the same result holds when in case the ground field is $\mathbb{C}$ the positive cone is replaced by the K\"ahler cone of $X$, see \cite[Prop.~6.3]{GrebToma}.
\begin{remark}
 As the proof uses the Hodge Index Theorem, this is one of the places in this work where the smoothness assumption is essential.
\end{remark}

The next theorem \cite[Cor.~6.12]{GRTI} deals with the second issue, at least in a number of special cases.
\begin{theorem}[Boundedness II]\label{thm:boundedness_II}
  Let $X$ be a smooth projective variety over an algebraically closed field of arbitrary characteristic and let $K$ be the convex hull of $[H_{1}]^{n-1},\ldots,[H_{N}]^{n-1}$, where each $H_{i}$ is an ample integral divisor on $X$.  Suppose in addition that
  \begin{enumerate}
  \item the rank of the torsion-free sheaves under consideration is at most two, or
  \item the dimension of $X$ is at most three, or
  \item the Picard rank of $X$ is at most two.
  \end{enumerate}
Then, the set of torsion-free sheaves $E$ with fixed Chern classes that are slope-semistable with respect to some class in $K$ is bounded.
\end{theorem}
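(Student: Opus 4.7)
The plan is to reduce each of the three cases to Theorem \ref{thm:K^+} via a uniform wall-crossing argument. The essential obstruction is that $K$ is the convex hull of the vertices $[H_i]^{n-1}$, all of which lie in the positive cone $\mathrm{Pos}(X)_\mathbb{R}$, but the positive cone is in general non-convex, so interior points of $K$ need not be of the form $\alpha^{n-1}$ with $\alpha$ ample, and one cannot directly invoke Theorem \ref{thm:K^+}.

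The generic structure of the argument would be as follows. Let $E$ be a torsion-free sheaf with the given Chern data that is slope-semistable with respect to some $\alpha \in K$. Applying the wall-crossing lemma to each segment from $\alpha$ to a vertex $[H_i]^{n-1}$, either $E$ is already $[H_i]^{n-1}$-semistable for some $i$ -- in which case classical boundedness \cite[Thm.~3.3.7]{Bible} applies at that vertex -- or there is a proper subsheaf $F \subset E$ that realises strict semistability at an intermediate class $\alpha_{t}$. Since the slope is linear in the polarisation, the values $\mu_{[H_i]^{n-1}}(F)$ are determined by the constraints at $\alpha$ and $\alpha_t$, and a combination of the Hodge index theorem with Langer's form of the Bogomolov inequality bounds these slopes uniformly in terms of the fixed Chern data of $E$. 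The remaining task is to show that the set of first Chern classes $c_1(F)$ consistent with these bounds is \emph{finite}, and it is precisely for this finiteness that each of the three hypotheses is needed.

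In case (1), any destabiliser $F$ has rank one, so $c_1(F)$ is a class in $\mathrm{NS}(X)$ satisfying, simultaneously for each $i \in \{1,\dots,N\}$, a two-sided bound on $c_1(F) \cdot [H_i]^{n-1}$; these finitely many linear inequalities cut out a bounded region of $\mathrm{NS}(X)_\mathbb{R}$, leaving only finitely many lattice points. In case (2), the power map $[H] \mapsto [H]^{n-1}$ is of degree at most two, so the boundary of $\mathrm{Pos}(X)_\mathbb{R}$ has a linear or quadratic description inside $N_1(X)_\mathbb{R}$; an elementary convex-geometric argument then shows that every $\alpha \in K$ lies on a segment whose endpoints are in $\mathrm{Pos}(X)_\mathbb{R}$, so Theorem \ref{thm:K^+} applies along that segment. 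In case (3), both $N^1(X)_\mathbb{R}$ and $N_1(X)_\mathbb{R}$ have dimension at most two, so $\mathrm{Pos}(X)_\mathbb{R}$ is a planar region whose boundary consists of (at most) two arcs, the images of the extremal rays of $\mathrm{Amp}(X)_\mathbb{R}$; once more by two-dimensional convex geometry, $K$ can be covered by finitely many segments lying entirely in $\mathrm{Pos}(X)_\mathbb{R}$, and Theorem \ref{thm:K^+} together with the wall-crossing lemma concludes.

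The main obstacle is guaranteeing a \emph{uniform} control of destabilising subsheaves as $\alpha$ varies over $K$ and as $i$ varies over the vertex index set. Without one of the restrictions (1)--(3), the list of admissible first Chern classes $c_1(F)$ can become infinite, either because the destabiliser has rank $\geq 2$ with no Bogomolov-type bound strong enough to cut out a bounded locus in $\mathrm{NS}(X)$ (failure of (1)), or because the non-convexity of $\mathrm{Pos}(X)_\mathbb{R}$ is too intricate to allow a covering of $K$ by segments with both endpoints positive (failure of (2) or (3)). The delicate point will therefore be to convert the slope bounds at the vertices $[H_i]^{n-1}$, obtained by compactness of $K$ and fixity of Chern data, into a genuine finiteness statement for $c_1(F)$ under each of the three hypotheses.
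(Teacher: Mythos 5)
Your reading of the theorem's structure is right: the obstruction is that $K$ is the convex hull of points of the non-convex cone $\mathrm{Pos}(X)_\mathbb{R}$, so Theorem \ref{thm:K^+} does not apply directly, and the three hypotheses rescue the wall-crossing argument in three different ways. Your case (3) is essentially the intended one, though it can be said more cleanly: when the Picard rank is at most two, $\mathrm{Pos}(X)_\mathbb{R}$ is a connected open subcone of a two-dimensional $N_1(X)_\mathbb{R}$ contained in an open half-plane, hence is itself convex, so $K\subset\mathrm{Pos}(X)_\mathbb{R}$ and Theorem \ref{thm:K^+} applies outright --- no covering by segments is needed. Case (1) is in the right spirit, but your finiteness step is not yet a proof: two-sided bounds on the numbers $c_1(F)\cdot[H_i]^{n-1}$ cut out a bounded region of $N^1(X)_\mathbb{R}$ only if the classes $[H_i]^{n-1}$ span $N_1(X)_\mathbb{R}$, which is not assumed; what one actually needs, and what the Hodge index theorem delivers for rank-one destabilisers, is an upper bound on $\mu_{\max}$ with respect to one fixed ample polarisation, i.e.\ the Kleiman-type boundedness criterion.

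The genuine gap is case (2), which is the heart of the theorem. Your argument there rests on the claim that every $\alpha\in K$ lies on a segment with both endpoints in $\mathrm{Pos}(X)_\mathbb{R}$, and that ``Theorem \ref{thm:K^+} applies along that segment.'' The first assertion is unjustified once the Picard rank exceeds two (a point of the convex hull of a connected set in $\mathbb{R}^\rho$ is in general only a convex combination of $\rho$ points of that set, not of two), and the second fails even if the first were granted: a segment with endpoints in the non-convex cone $\mathrm{Pos}(X)_\mathbb{R}$ need not be contained in it, and Theorem \ref{thm:K^+} only covers compact subsets \emph{of} $\mathrm{Pos}(X)_\mathbb{R}$. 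Falling back on the wall-crossing lemma lands you at an intermediate class $\alpha_t$ outside $\mathrm{Pos}(X)_\mathbb{R}$, where the Hodge-index and Bogomolov estimates you invoke are precisely the tools that require positivity of the class and so cannot be quoted. The missing idea, which is what actually drives the proof, is the auxiliary cone $C^+(X)=\bigcup_{L\in\mathrm{Amp}(X)_\mathbb{R}}L^{n-2}\cdot K^+_L(X)$ of ``positive'' curve classes: it contains $\mathrm{Pos}(X)_\mathbb{R}$, it is exactly the locus on which Langer's Bogomolov inequality and the Hodge index theorem yield uniform bounds on destabilising slopes, and the crucial, non-obvious fact is that $C^+(X)$ is \emph{convex} when $\dim X=3$, so that $K\subset C^+(X)$ and the wall-crossing argument closes. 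Without this convexity statement (or a substitute for it) your case (2) does not go through.
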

The main difficulty in proving this boundedness statement is the fact that the curve classes with respect to which semistability is considered might lie outside $\Pos(X)_{\mathbb R}$. This does not occur when the Picard rank of $X$ is less than three and is easier to deal with when the rank of the torsion-free sheaves is at most two. In the remaining cases we introduce a larger cone than $\Pos(X)$ consisting of ``positive'' curve classes,
$$C^+(X)\;\;:= \bigcup_{L\in\Amp(X)_\R}L^{n-2}\cdot  K^+_L(X)\;\;\; \subset\;\; N_1(X)_\R,$$ where 
$K^+_L(X):=\{ \beta\in N^1(X)_\R \ | \ \beta^2L^{n-2}>0, \ \beta L^{n-1}>0\}.$ This cone coincides with the ``positive'' component of the cone of curve classes of positive self-intersection when $\dim(X)=2$. Moreover, it naturally appears in the proof of Bogomolov's Inequality, cf.~\cite[Sect.~7.3]{Bible}. The miracle that makes the proof of Theorem~\ref{thm:boundedness_II} work is that $C^+(X)$ is still convex when $\dim(X)=3$.  

\begin{remark}
We expect the above theorem to hold more generally (that is for manifolds of any dimension), and it is natural to ask if it even holds when $K$ is allowed to be a convex polyhedral subset of the interior of the movable cone.
\end{remark}

\subsection{Linear rational chamber structure on $\mathrm{Pos}(X)_\mathbb{R}$}\label{subsect:chamberstructure_on_Pos}
Once boundedness has been established, one can study the change of semistability as the polarisation varies over a compact set in the positive cone, see \cite[Thm.~6.6]{GrebToma}.
\begin{proposition}[Chamber structure on $\mathrm{Pos}(X)_\mathbb{R}$]
 For any set of topological invariants of torsion-free sheaves on $X$ and for any compact subset $K \subset \mathrm{Pos}(X)_\mathbb{R}$, there exist finitely many linear rational walls defining a chamber structure witnessing the change in semistability in the following sense: if two elements $\alpha$ and $\beta$ in $K$ belong to the same chamber then for any torsion-free coherent sheaf $E$ with the given topological invariants, $E$ is $\alpha$-(semi)stable if and only if $E$ is $\beta$-(semi)stable. Moreover, a refined chamber structure can be constructed that additionally witnesses the change of Jordan-H\"older filtrations of semistable sheaves.
\end{proposition}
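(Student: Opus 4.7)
The plan is to combine Theorem \ref{thm:K^+} with the \emph{linearity} of the map $\alpha \mapsto \mu_\alpha$ to produce finitely many rational linear walls. First, applying Theorem \ref{thm:K^+} to $K$ yields a bounded family $\mathcal{F}$ containing every torsion-free sheaf $E$ with the prescribed topological invariants that is $\alpha$-semistable for at least one $\alpha \in K$.

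A standard consequence of boundedness is that the set $\mathcal{D}$ of numerical classes $(r', [c']) \in \{1, \ldots, \rank(E) - 1\} \times N^1(X)$ arising as $(\rank(F), [c_1(F)])$ for some subsheaf $F \subsetneq E$ of some $E \in \mathcal{F}$ with $\mu_\alpha(F) \geq \mu_\alpha(E)$ for some $\alpha \in K$ is \emph{finite}. Indeed, boundedness of $\mathcal{F}$ bounds $\mu_{H_0^{n-1}}$ of all subsheaves of members of $\mathcal{F}$ from above, and combining this with the lower bound $\mu_\alpha(F) \geq \mu_\alpha(E)$ (plus compactness of $K$ and the fact that the image of $N^1(X)$ in $N_1(X)_\mathbb{R}^*$ is a lattice) pins down finitely many possibilities. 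To each $(r', c') \in \mathcal{D}$ I associate the hyperplane
\[
W_{r', c'} := \bigl\{\alpha \in N_1(X)_\mathbb{R} : \rank(E)\, (c' \cdot \alpha) - r'\, (c_1(E) \cdot \alpha) = 0 \bigr\},
\]
which is rational (since $c'$ and $c_1(E)$ lie in $N^1(X)$) and \emph{linear} in $\alpha$---this is precisely the advantage of working with curve classes rather than with divisor classes.

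Finitely many such hyperplanes meet $K$; these are the walls. On each chamber, i.e.\ each connected component of $K \setminus \bigcup W_{r', c'}$, the sign of $\mu_\alpha(F) - \mu_\alpha(E)$ is locally constant for every potential destabiliser $F$. Since any subsheaf that destabilises an $E \in \mathcal{F}$ somewhere in $K$ corresponds to a class in $\mathcal{D}$, both semistability and stability of $E$ are constant throughout the chamber; for an $E$ that is semistable nowhere in $K$ the equivalence is vacuous.

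For the refined chamber structure witnessing Jordan-H\"older filtrations, I iterate the construction. The candidate equal-slope subobjects of strictly $\alpha$-semistable $E \in \mathcal{F}$ have numerical classes in the finite set $\mathcal{D}$, hence form themselves a bounded family (by a further application of Theorem \ref{thm:K^+} to their topological types), and so do their own potential destabilisers and equal-slope subobjects. Adding the finitely many additional linear rational hyperplanes thereby produced gives the desired refinement, along which the $S$-equivalence class of the Jordan-H\"older graded object is locally constant. The main obstacle is establishing the uniform finiteness of $\mathcal{D}$; once that is in hand, the linearity and rationality of the walls are immediate from the fact that $\alpha \mapsto \mu_\alpha$ is linear on $N_1(X)_\mathbb{R}$.
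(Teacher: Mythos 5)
Your overall architecture --- boundedness of the relevant family, reduction to finitely many numerical types $(\rank F, c_1(F))$ of potential destabilisers, the observation that each such type cuts out a \emph{linear rational} hyperplane in $N_1(X)_\mathbb{R}$ because $\alpha\mapsto\mu_\alpha$ is linear, and an iteration of the same scheme for the Jordan--H\"older refinement --- is exactly the approach behind the cited proof. The gap is in the one step you yourself identify as the main obstacle: the finiteness of $\mathcal{D}$. The justification you offer does not work. Boundedness of $\mathcal{F}$ gives an \emph{upper} bound $c_1(F)\cdot H_0^{n-1}\le r'C$ for a fixed ample $H_0$, and the destabilising condition gives a \emph{lower} bound $c_1(F)\cdot\alpha\ge r'\mu_\alpha(E)$ only against some $\alpha\in K$ that varies with $F$. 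Two such one-sided linear constraints do not pin down finitely many lattice points in $N^1(X)$ as soon as $\dim_\mathbb{R} N^1(X)_\mathbb{R}\ge 2$: already for $K$ a single point $\{\alpha\}$ (which is compact), the set of classes $c'$ with $-C'\le c'\cdot\alpha$ and $c'\cdot H_0^{n-1}\le C$ is an infinite slab in the lattice. Compactness of $K$ and the lattice structure cannot rescue this, because the constraints only control the pairing of $c_1(F)$ against directions probed by $K$ and $H_0^{n-1}$, not against all of $N_1(X)_\mathbb{R}$.

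What is actually needed --- and what the proof in \cite[Thm.~6.6]{GrebToma} supplies --- is a \emph{lower} bound on $\mu_{H_0}(F)$ for the \emph{fixed} ample class $H_0$, uniform over all saturated subsheaves $F$ that destabilise some $E\in\mathcal{F}$ with respect to some $\alpha\in K$; Grothendieck's lemma then bounds the family of such $F$ and hence yields finiteness of $\mathcal{D}$. Obtaining that lower bound is precisely where the Hodge Index Theorem and Langer's version of the Bogomolov inequality enter (the same ingredients the survey highlights in the proof of Theorem~\ref{thm:K^+}): one locates a class $\gamma$ on the segment between $\alpha$ and $H_0^{n-1}$ where $\mu_\gamma(F)=\mu_\gamma(E)$, uses the discriminant bounds for the pieces $F$ and $E/F$ at $\gamma$, and then transfers the resulting quadratic constraint on $\xi=\rank(E)c_1(F)-\rank(F)c_1(E)$ into a bound on $\xi\cdot H_0^{n-1}$ via Hodge index. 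Without this input your wall set $\{W_{r',c'}\}$ is not known to be finite, and the rest of the argument (which is otherwise fine, including the reduction of the Jordan--H\"older statement to a further finite refinement) does not get off the ground.
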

\begin{remark}
We note that the chamber structure is not necessarily effective, i.e., the set of semistable sheaves does not necessarily change when a wall is crossed, and that it depends on the chosen set $K$, i.e., enlarging $K$ to $\hat K$ might introduce new walls (which will be ineffective) in $K$.
\end{remark}

By Proposition \ref{P} our chamber structure on $\Pos(X)_{\mathbb R}$ pulls back to a locally finite chamber structure on $\Amp(X)_\mathbb{R}$ witnessing the change of slope-stability. The corresponding walls  thus obtained in $\Amp(X)_\mathbb{R}$ are given by equations that are homogeneous of degree $n-1$, so, except in the case when $\dim_\mathbb{R} N^1(X)\leq 2$, these need not be linear, see Figure 1 below. This explains the pathologies encountered in the approaches of Schmitt and Qin. More precisely, on the one hand, as we have seen, Schmitt considers segments connecting rational points in $\Amp(X)_\mathbb{R}$ as well as points on these segments where the induced notion of (slope-)stability changes. These separating points are precisely the intersection points of his segments with our walls, which clarifies the appearance of non-rational points in Schmitt's example. At the same time, the non-linearity also explains the pathologies of Qin's \emph{linear rational} chamber structure on $\Amp(X)_\mathbb{R}$, and in particular the fact that the latter cannot be locally finite in general.  

The rationality of the walls and a repeated use of Hard Lefschetz allow one to prove the following finer property of the chamber structure.
\begin{proposition}[Representing chambers by complete intersection curves]\label{prop:a_cic_in_every_chamber}
 Let  $X$ be a projective manifold of dimension $n>2$ and fix some chamber (with respect to some chosen compact set $K$ as above) $\mathcal{C} \subset \mathrm{Pos}(X)_\mathbb{R}$ of stability polarisations in $\mathrm{Pos}(X)_\mathbb{R}$. Then, there exist some ample integral classes $A$ and $B$ such that the complete intersection class 
 $A^{n-2}.B$ lies in $\mathcal{C}$.
\end{proposition}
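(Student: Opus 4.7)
The plan is to reduce to finding a rational ample class $A$ with $A^{n-1}\in\mathcal{C}$, and then to use Hard Lefschetz to upgrade this to a pair $(A,B)$ of integer ample classes whose complete intersection still lies in $\mathcal{C}$. For the first step, Proposition~\ref{P} guarantees that $p_{n-1}\colon\Amp(X)_\R\to\Pos(X)_\R$ is a homeomorphism. Since rational ample classes are dense in $\Amp(X)_\R$ and $\mathcal{C}$ is open in $\Pos(X)_\R$ (it is cut out by finitely many strict rational linear inequalities), one can pick a rational ample class $A$ such that $A^{n-1}\in\mathcal{C}$.

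Next comes the central use of Hard Lefschetz: for the ample class $A$, multiplication by $A^{n-2}$ defines a linear isomorphism
\[
\psi_A\colon N^1(X)_\R \longrightarrow N_1(X)_\R,\qquad B\longmapsto A^{n-2}\cdot B.
\]
In particular $\psi_A$ is open, so its preimage $U:=\psi_A^{-1}(\mathcal{C})\cap\Amp(X)_\R$ is an open subset of $\Amp(X)_\R$ containing $A$ (since $\psi_A(A)=A^{n-1}\in\mathcal{C}$). By density of rational ample classes, choose a rational ample class $B\in U$, so that $A^{n-2}\cdot B\in\mathcal{C}$. To finish, clear denominators: pick a positive integer $N$ so that $NA$ and $NB$ are both integral and ample, and observe that
\[
(NA)^{n-2}\cdot(NB)=N^{n-1}\bigl(A^{n-2}\cdot B\bigr)
\]
lies on the same ray in $N_1(X)_\R$ as $A^{n-2}\cdot B$. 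Because the walls defining the chamber structure are linear rational, membership in the cone associated with $\mathcal{C}$ is invariant under positive rescaling, so $(NA)^{n-2}\cdot(NB)\in\mathcal{C}$.

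The main obstacle, and the reason Hard Lefschetz is needed repeatedly, is that the constructed class $A^{n-2}\cdot B$ depends nonlinearly on the pair $(A,B)$, while the chamber $\mathcal{C}$ is defined by linear conditions in $N_1(X)_\R$. Hard Lefschetz linearizes this dependence once $A$ is fixed: it ensures that, for each ample $A$, the family $\{A^{n-2}\cdot B : B\in\Amp(X)_\R\}$ sweeps out an \emph{open} subset of $N_1(X)_\R$ rather than a lower-dimensional one. Combined with the rationality of the walls — which supplies both the density used to arrange $A^{n-1}\in\mathcal{C}$ and the homogeneity used in the rescaling — this gives exactly the flexibility required to realize every chamber by an integral complete intersection class.
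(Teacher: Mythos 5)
Your argument only treats chambers that are open in $\Pos(X)_\R$, but the proposition is stated for \emph{every} chamber, and the paper explicitly emphasises that this includes chambers of non-maximal dimension contained in one or several walls. For such a chamber $\mathcal{C}$, your first step breaks down twice over. First, $\mathcal{C}$ is not ``cut out by finitely many strict rational linear inequalities'': it lies inside a proper rational linear subspace of $N_1(X)_\R$, so it has empty interior and density of rational points in $\Amp(X)_\R$ gives you nothing. Second, and more seriously, there may be \emph{no} rational ample class $A$ with $A^{n-1}\in\mathcal{C}$ at all: the preimage $p_{n-1}^{-1}(\mathcal{C})$ is carved out of $\Amp(X)_\R$ by equations homogeneous of degree $n-1$, and Schmitt's example (Picard rank two, where the wall meets the ample cone only in irrational rays) shows this preimage can consist entirely of irrational classes. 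These wall chambers are precisely the content of the proposition --- for a top-dimensional chamber one may simply take $B=A$ rational with $A^{n-1}\in\mathcal{C}$ and rescale, with no Hard Lefschetz needed --- so the case your proof handles is the trivial one.

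The second half of your argument is the right idea, but it must be run without first arranging $A^{n-1}\in\mathcal{C}$. Pick any $\gamma\in\mathcal{C}$ and write $\gamma=\alpha^{n-1}$ with $\alpha\in\Amp(X)_\R$ possibly irrational (Proposition~\ref{P}). Choose a \emph{rational} ample $A$ merely close to $\alpha$; then $\psi_A=A^{n-2}\cdot(-)$ is a linear isomorphism defined over $\Q$ by Hard Lefschetz, so it pulls the rational linear subspace supporting $\mathcal{C}$ back to a rational linear subspace of $N^1(X)_\R$, and $\psi_A^{-1}(\mathcal{C})$ is a relatively open subset of it meeting $\Amp(X)_\R$ near $\psi_A^{-1}(\gamma)$ once $A$ is close enough to $\alpha$. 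Rational points are dense in relatively open subsets of rational linear subspaces, which is where the rationality of the walls genuinely enters --- not as density in an open set, but as density in the lower-dimensional strata. Choosing such a rational $B$ and clearing denominators (harmless, since the stability condition and hence the conical chamber structure are invariant under positive rescaling) completes the proof in all cases.
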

We emphasise that the previous proposition makes a statement about every chamber, also about those of non-maximal dimension contained in one or several walls. 

As in Schmitt's example \cite[Ex.~1.1.5]{Schmitt} the Picard number of the base manifold is equal to two, Proposition~\ref{prop:a_cic_in_every_chamber} takes the following form: if $H$ defines a non-rational wall in the segment $S \subset \Amp(X)_\mathbb{R}$ under consideration, then some real multiple of $[H]^2$ is equal to $[A].[B]$ for some ample integral divisors $A$ and $B$ on $X$. Therefore, the corresponding stability condition is equivalent to the one defined by the multipolarisation $(A, B)$, at which point the non-rationality problem simply disappears!

\begin{center}
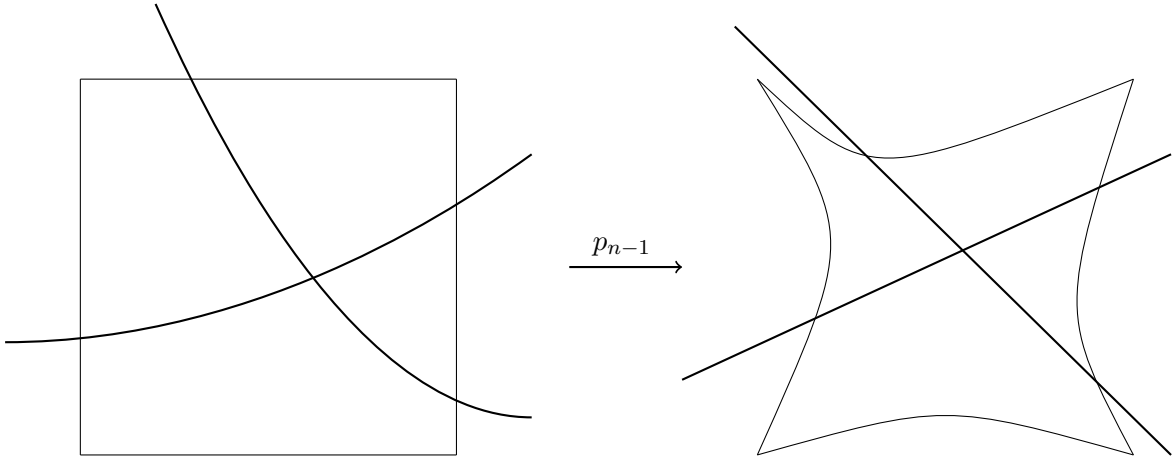
\begin{figure}[h]\label{figure1}
\begin{tikzpicture}
 \draw (-10, 0) -- (-5,0);
\draw (-5,0) -- (-5,5);
\draw (-10, 0) -- (-10, 5);
\draw (-10, 5) -- (-5,5);
\draw[thick,->] (-3.5, 2.5) node[above right]{$\;\,p_{n-1}$} -- (-2, 2.5);
 \draw (-1,0) .. controls(1.5,0.7) .. (4,0) ;
 \draw (4,0) .. controls(3,1.9) .. (4,5);
 \draw (4,5) .. controls(0.5,3.6)  .. (-1,5);
 \draw (-1,5)  .. controls(0.3,2.9) .. (-1,0);
 \draw[thick] (-1.3,5.7) -- (4.5,0);
 \draw[thick] (-2, 1) -- (4.5, 4);
\draw[thick](-4, 0.5) parabola  (-9, 6);
 \draw[thick] (-11, 1.5) parabola (-4, 4);
\end{tikzpicture}
\caption{A cross section through a compact set in $N^1(X)_\R$ and its image in $N_1(X)_\mathbb{R}$.}
\end{figure}
 \end{center}

\section{A moduli space for slope-semistable sheaves}
We have seen that the ``change of polarisation'' question (Q2), when investigated on the level of slope-functions, naturally leads to the discussion of slope-stability with respect to classes of $1$-cycles on the given projective manifold $X$. At the same time, the previous discussion shows that in order to solve question (Q1) concerning the existence of moduli spaces of slope-semistable sheaves for an arbitrary class in $\mathrm{Pos}(X)_\mathbb{R}$ it suffices to construct moduli spaces for sheaves that are slope-semistable with respect to multipolarisations $(H_1, \dots, H_{n-1})$, cf.~Proposition~\ref{prop:a_cic_in_every_chamber}. The latter situation has the huge advantage that up to some multiple the class under consideration has a geometric description: it can be represented by an actual smooth curve arising as the intersection of $n-1$ general ample divisors in the linear systems of $H_1, \dots, H_{n-1}$. 

In this section we show how to construct a moduli space for $(H_1, \dots, H_{n-1})$-slope-semistable sheaves.

\subsection{Formulation and discussion of results}
We recall that it is known that a coarse moduli space $\mathcal{M}^{\mu\text{-}s}:= \mathcal{M}^{\mu\text{-}s}_{(H_1, \dots, H_{n-1})}(\underline {c})$ for $(H_1, \dots, H_{n-1})$-slope-stable vector bundles with fixed topological invariants $\underline{c}$ exists, for example as an open separated subscheme of the moduli space of simple bundles on $X$, as constructed by Altman-Kleiman \cite{AltmanKleiman} and Kosarew-Okonek~\cite{KosarewOkonek}. It contains as a closed subspace the moduli space $\mathcal{M}^{\mu\text{-}s}(\Lambda)$ of slope-stable bundles with fixed determinant line bundle $\Lambda$. In \cite{GrebToma}, the following main result is proven, providing a natural generalisation of Jun Li's algebro-geometric construction of the Donaldson-Uhlenbeck compactification to higher dimensions:

\begin{theorem}[A projective moduli space for slope-semistable sheaves]\label{thm:slopemoduli}
Let $X$ be a projective manifold of dimension $n \geq 2$, $H_1,$ $...,$ $H_{n-1}$ be ample divisors, 
$c_i \in H^{2i}\bigl(X, \mathbb{Z}\bigr)$ for $1\le i\le n$,  $r$ a positive integer, $\underline{c} \in  K(X)_{num}$ a class with rank $r$ and Chern classes $c_j(c) = c_j$, 
and $\Lambda$ a line bundle on $X$ with $c_1(\Lambda)= c_1 \in H^2 (X, \mathbb{Z})$. Then, there exists a ``modular'' semi-normal projective compactification $\mathcal{M}^{\mu\text{-}ss}(\Lambda) = \mathcal{M}^{\mu\text{-}ss}(\underline{c}, \Lambda)$ of the semi-normalisation $(\mathcal{M}^{\mu\text{-}s}(\Lambda))^{w\nu}$.
\end{theorem}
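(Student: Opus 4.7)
The plan is to follow the blueprint of Jun Li's construction in the surface case, replacing the use of an ample divisor by the multipolarisation $(H_1, \ldots, H_{n-1})$, whose advantage is that a general complete intersection of divisors in $|k H_1|, \ldots, |k H_{n-1}|$ is an actual smooth curve $C$ to which one may restrict and invoke the classical GIT picture.

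First I would fix an auxiliary ample divisor $H$ (for instance $H = H_1 + \cdots + H_{n-1}$) and consider the Gieseker--Maruyama moduli space $\mathcal{M}_H(\underline{c}, \Lambda)$ with respect to $H$. By boundedness (Theorem~\ref{thm:K^+} and Theorem~\ref{thm:boundedness_II}) together with the standard implications between Gieseker- and slope-semistability, for $k \gg 0$ every $(H_1, \ldots, H_{n-1})$-slope-semistable sheaf with the given topological type is also $H$-Gieseker-semistable, so $\mathcal{M}_H$ contains all the sheaves we care about as (possibly non-separated) closed points. The target space $\mathcal{M}^{\mu\text{-}ss}(\Lambda)$ will be constructed as the image of $\mathcal{M}_H$ under a carefully chosen morphism to a projective space.

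Next I would construct a Donaldson-type determinant line bundle $\mathcal{L}$ on $\mathcal{M}_H$ adapted to the multipolarisation: starting from the universal family $\mathcal{E}$ on $X \times \mathcal{M}_H$, choose a virtual class $u \in K^0(X)_{\mathbb{Q}}$ supported on a general complete intersection curve $C = D_1 \cap \cdots \cap D_{n-1}$ with $D_i \in |k H_i|$ and with Euler characteristic calibrated against $(\underline{c}, \Lambda)$ so that $\lambda(u) := \det\bigl(\mathbf{R}\pi_{\mathcal{M}, *}(\mathcal{E} \otimes u)\bigr)$ descends to the good quotient. By construction $\mathcal{L} = \lambda(u)$ is essentially the pullback (via restriction to $C$) of the ample determinant line bundle on the moduli of semistable bundles on the curve $C$. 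Semi-ampleness of $\mathcal{L}$ is then the decisive step and the main obstacle: one must show that sections of $\mathcal{L}^N$ separate precisely the slope-S-equivalence classes of slope-semistable sheaves. The input is the Mehta--Ramanathan theorem, which asserts that a slope-semistable sheaf on $X$ restricts to a semistable sheaf on a general complete intersection curve; the subtlety is that ``general'' a priori depends on the sheaf, so one needs a uniform version on a bounded family, which is exactly where the boundedness results from Section~2 intervene. With that in hand, the classical theory on curves (Faltings/Seshadri) produces enough sections of $\mathcal{L}^N$ to embed the quotient of the slope-semistable locus.

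Finally, defining $\mathcal{M}^{\mu\text{-}ss}(\Lambda)$ as the Stein factorisation of the morphism $\mathcal{M}_H \to \mathbb{P}(H^0(\mathcal{M}_H, \mathcal{L}^N)^\vee)$ gives a projective scheme whose closed points, set-theoretically, correspond to equivalence classes $(\mathrm{Gr}(E)^{**}, Z_E)$ recording the double dual of the associated graded of a Jordan--H\"older filtration together with the $0$-cycle $Z_E$ supported where $\mathrm{Gr}(E) \hookrightarrow \mathrm{Gr}(E)^{**}$ fails to be an isomorphism (the higher-dimensional analogue of Jun Li's identification). One then checks independence of the map from the auxiliary choices of $H$, $k$, and the divisors $D_i$ by a direct comparison of sections for different choices. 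To upgrade this set-theoretic description to a ``modular'' statement one passes to the semi-normalisation; this is forced because the universal property can only be expected to hold up to the natural equivalence on points, and semi-normalisation is the minimal refinement of scheme structure compatible with bijection on closed points. The resulting semi-normal projective scheme $\mathcal{M}^{\mu\text{-}ss}(\Lambda)$ contains $(\mathcal{M}^{\mu\text{-}s}(\Lambda))^{w\nu}$ as an open subscheme, yielding the desired compactification.
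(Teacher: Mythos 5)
Your overall blueprint (restrict to a complete intersection curve, use the determinant line bundle coming from the curve, take the image in projective space) is the right one and matches the paper in spirit, but there are two concrete gaps. The most serious is your choice of parameter space: you propose to build $\mathcal{M}^{\mu\text{-}ss}(\Lambda)$ as the image of the Gieseker--Maruyama moduli space $\mathcal{M}_H$, justified by the claim that every $(H_1,\ldots,H_{n-1})$-slope-semistable sheaf is $H$-Gieseker-semistable. The implications go the other way: slope-stable $\Rightarrow$ Gieseker-stable $\Rightarrow$ Gieseker-semistable $\Rightarrow$ slope-semistable (cf.\ Lemma~\ref{lemma:slope_Gieseker_relation}), so the slope-semistable locus is strictly \emph{larger} than the Gieseker-semistable one (e.g.\ $\mathscr{O}_X\oplus \mathscr{I}_Z$ is slope-semistable but Gieseker-unstable). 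Consequently $\mathcal{M}_H$ neither contains all the objects to be parametrised, nor can it yield the modular property: a flat family of slope-semistable sheaves over a semi-normal base induces no classifying map to $\mathcal{M}_H$. The paper instead works with a locally closed subscheme $R^{\mu\text{-}ss}$ of a Quot-scheme parametrising \emph{all} slope-semistable sheaves of the given type, puts the determinant line bundle $\lambda_{\mathscr{F}}$ there, and takes Proj of the ring of $G$-invariant sections over the semi-normalisation of $R^{\mu\text{-}ss}$.

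The second gap is the role of semi-normality, which you attribute to an abstract need to match the universal property on closed points. The actual reason is technical and specific: the curve $C$ on which $E|_C$ becomes semistable depends on $E$, so over $R^{\mu\text{-}ss}$ the restricted family $\mathscr{F}|_{C\times R^{\mu\text{-}ss}}$ is flat only on an open locus, and the invariant sections produced by GIT on the curve are a priori defined only there; one passes to the semi-normalisation precisely so that these sections, which extend continuously over the non-flat locus, extend holomorphically. Relatedly, the uniformity you need in the restriction step is not a ``uniform Mehta--Ramanathan over a bounded family'' obtained from boundedness alone, but Langer's effective restriction theorem (Theorem~\ref{prop:slopeMR}), whose threshold $k_0$ depends only on the topological type. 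Finally, your set-theoretic description of the boundary points is the surface picture: in dimension $n$ the cycle $C_E$ has dimension $n-2$ (not $0$), and even then the identification of points with pairs $(E^{\sharp},C_E)$ fails in general because the Quot-to-Chow morphism need not have connected fibres (Example~\ref{ex:Hilb-to-Chow}); moreover the determinant line bundle cannot separate sheaves differing in codimension three.
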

To explain the statement in more detail, we remark the following points:

1) A reduced complex-analytic space $X$ is called \emph{semi-normal} or \emph{weakly normal}, if for every open subset $U \subset X$ the restriction map 
\[\mathscr{O}_X(U) \to \mathscr{C}_X(U) \cap \mathscr{O}_X(U\cap X_{reg})\]
is surjective, i.e., if every continuous function that is holomorphic on the regular part of $U$ is already holomorphic on $U$. Every complex space has a functorial semi-normalisation $X^{w\nu} \to X$, i.e., maps of spaces lift to maps of semi-normalisations, see for example \cite[\S 15]{RemmertSCV_VII}. Moreover, the semi-normalisation map is a homeomorphism of the underlying topological spaces, i.e., the complex spaces $X$ and $X^{w\nu}$ differ only in their sheaf of holomorphic functions, which is why the complex structure induced by the semi-normalisation is also called \emph{maximal}.

In other words, the semi-normalisation contains all the topological information about the moduli space, which is important when aiming for the definition and computation of invariants of $X$ via topological invariants of $M^{\mu\text{-}ss}$, cf.~the theory of Donaldson invariants \cite{DK90}. We will see later where the semi-normality assumption is used in our proof of Theorem~\ref{thm:slopemoduli}, at this point we remark that Chow varieties of $2$-codimensional cycles in $X$ arise as special cases of our construction, cf.~Example~\ref{ex:moduliequalsChow} below. These are classically known to have natural scheme structures in dimension two, whereas the higher-dimensional situation is much more subtle, and in general, they are only endowed with the structure of a semi-normal variety, see for example \cite[Chap.~I, Thm.~3.21]{KollarRatCurves}. In this sense, it is not surprising that our construction only yields the maximal complex structure on the topological space underlying $M^{\mu\text{-}ss}(\Lambda)$, and not a full scheme structure containing finer information about infinitesimal families of semistable sheaves.

2) The compactification is \emph{modular} in the following sense: the variety $\mathcal{M}^{\mu\text{-}ss}(\Lambda)$ is naturally polarised by an ample line bundle $\mathscr{O}_{\mathcal{M}^{\mu\text{-}ss}(\Lambda)}(1)$, and there exists a natural number $N$ having the following property: for every flat family $\mathscr{F}$ of $(H_1, \dots, H_{n-1})$-semistable sheaves with the chosen topological invariants and determinant $\Lambda$ parametrised by a semi-normal scheme $B$, there exists a 
 classifying morphism $\phi_{\mathscr{F}}: B \to \mathcal{M}^{\mu\text{-}ss}(\Lambda)$ such that
\[\phi_{\mathscr{F}}^* \bigl(\mathscr{O}_{\mathcal{M}^{\mu\text{-}ss}(\Lambda)}(1)\bigr) = \lambda_\mathscr{F}^{\otimes N},\]
where $\lambda_\mathscr{F}$ is a certain determinant line bundle on the base $B$ of the family $\mathscr{F}$. The triple $(\mathcal{M}^{\mu\text{-}ss}(\Lambda), \mathscr{O}_{\mathcal{M}^{\mu\text{-}ss}(\Lambda)}(1), N)$  is uniquely determined up to isomorphism by this and a further, slightly technical, property, see the original paper \cite{GrebToma} for details.

3) Fixing the determinant is not a serious restriction, we expect that essentially the same proof will lead to a moduli space that is projective over the component of the Picard variety $\mathrm{Pic}^{c_1}(X)$ parametrising line bundles with first Chern class $c_1$ on $X$.

\subsection{Idea of the proof}
One of the main ideas of the proof is to use the following improvement of Bogomolov's effective restriction theorem \cite[Sect.~7.3]{Bible} due to Langer \cite[Thm.~5.2 and Cor.~5.4]{Langer}:
\begin{theorem}[Semistable Restriction Theorem]\label{prop:slopeMR} If the torsion-free coherent sheaf $E$ is (semi)stable with respect to the multipolarisation 
$(H_1, ..., H_{n-1})$, then there is a positive threshold $k_0\in \N^{>0}$ depending only on the topological type of $E$ such that the following holds: if $k\ge k_0$ and if $D\in |kH_1|$ is any smooth divisor such that 
\begin{enumerate}
 \item $E|_D$ is torsion-free, and 
 \item the restriction $F_j|_D$ of any Jordan-H\"older factor $F_j$ of $E$ is torsion-free,
\end{enumerate}
  then the restriction $E|_D$ is (semi)stable with respect to $(H_2|_D, ..., H_{n-1}|_D)$.
\end{theorem}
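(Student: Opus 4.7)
The plan is to adapt the classical Mehta--Ramanathan--Bogomolov strategy for effective restriction to the multipolarisation setting, in the form sharpened by Langer. The first reduction is to the stable case. Using a Jordan--H\"older filtration $0 = E_0 \subset E_1 \subset \cdots \subset E_m = E$ with stable quotients $F_j$ all of the same $(H_1,\ldots,H_{n-1})$-slope, hypothesis (2) ensures that each $F_j|_D$ is torsion-free, and if the theorem is known for stable sheaves each $F_j|_D$ is semistable of the same slope on $D$ for $k \geq \max_j k_0(F_j)$. Since an extension of semistable sheaves of equal slope is itself semistable, the conclusion for $E|_D$ follows.

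For the stable case, the key observation is the slope-scaling identity: for every torsion-free subsheaf $G \subset E$,
\[
\mu_{(H_2|_D,\ldots,H_{n-1}|_D)}(G|_D) \;=\; k \cdot \mu_{(H_1,H_2,\ldots,H_{n-1})}(G).
\]
Thus stability of $E$ on $X$ translates into a strict slope inequality on $D$ for any subsheaf of $E|_D$ that is the restriction of a subsheaf of $E$. The strategy is to argue by contradiction: suppose there exist arbitrarily large $k$ and divisors $D \in |kH_1|$ satisfying (1), (2) such that $E|_D$ admits a maximal destabilising subsheaf $F_D$, and then show that $F_D$ must in fact be the restriction of a saturated subsheaf $F \subset E$, contradicting stability via the identity above.

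The lifting step is a Grauert--M\"ulich/Flenner-type argument. Consider the relative maximal-destabilising-subsheaf construction over the family of smooth divisors in $|kH_1|$; using boundedness (of the Quot schemes of torsion-free quotients of prescribed rank and degree with respect to the restricted polarisation, and of slope-semistable sheaves themselves via Theorem~\ref{thm:K^+}), one can pick out a flat limit as $k \to \infty$ and degenerate in an auxiliary pencil to produce either a subsheaf $F \subset E$ with $F|_D = F_D$, or at least a coherent subsheaf of $E$ with slope large enough to violate stability.

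The main obstacle, and the reason an \emph{effective} threshold $k_0$ depending only on the topological type exists, is Langer's generalisation of the Bogomolov inequality for multipolarisations (valid in arbitrary characteristic). Applied both to $E$ on $X$ and to the hypothetical destabilising $F_D$ on $D$, and combined with the Hodge Index Theorem applied to the classes on $D$, it yields an explicit bound on how much the slope of $F_D$ can exceed the bound allowed by the slope-scaling identity. This bound grows sublinearly in $k$ while the gap required by stability of $E$ grows linearly in $k$, so for $k$ larger than an explicit $k_0 = k_0(\mathrm{rank}(E), c_i(E), (H_j))$ the two are incompatible, completing the contradiction. The subtle part is precisely that this Bogomolov-type inequality must hold for arbitrary saturated subsheaves on $D$ rather than only for complete intersection curves, which is the content of Langer's correction to the original formulation.
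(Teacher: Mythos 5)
First, a remark on the ground truth: the paper does not prove this statement at all --- it is quoted from Langer [Lan04, Thm.~5.2 and Cor.~5.4] and used as a black box --- so your attempt must be measured against Langer's proof. Your reduction to the stable case via the Jordan--H\"older filtration and your slope-scaling identity $\mu_{(H_2|_D,\ldots,H_{n-1}|_D)}(G|_D)=k\,\mu_{(H_1,\ldots,H_{n-1})}(G)$ are both correct. The gap is in the lifting step. A Grauert--M\"ulich/Flenner/Mehta--Ramanathan-type degeneration over the linear system $|kH_1|$ only controls the restriction to a \emph{general} member, and it is intrinsically non-effective: the induction it runs produces a threshold depending on the sheaf $E$ itself, not only on its topological type. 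The theorem asserts a bound $k_0$ depending only on the topological type, and --- crucially --- a conclusion for \emph{every} smooth $D\in|kH_1|$ satisfying (1) and (2). For such a special $D$ the maximal destabilising subsheaf of $E|_D$ need not be the specialisation of any relative construction over the generic divisor, so there is nothing to lift, and your "flat limit as $k\to\infty$" has no meaning since the divisors live in different linear systems.

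The missing idea is the mechanism that converts a destabilising subsheaf on $D$ into numerical data on $X$: given a destabilising quotient $E|_D\twoheadrightarrow Q$, one forms the elementary transformation $E':=\ker\bigl(E\to i_*Q\bigr)$, computes $c_1(E')$ and the discriminant $\Delta(E')$ in terms of $\Delta(E)$, $k$ and the rank and degree of $Q$, and applies Langer's Bogomolov-type inequality --- which bounds the spread of the Harder--Narasimhan slopes of an \emph{arbitrary} torsion-free sheaf on $X$ with respect to a collection of nef divisors in terms of its discriminant --- to $E'$ on $X$, not to $F_D$ on $D$. Since $E'$ contains $E(-D)$ and maps onto the kernel of $E|_D\to Q$, instability of $E|_D$ forces the slopes of the Harder--Narasimhan factors of $E'$ to spread linearly in $k$, while the discriminant term grows too slowly, yielding the contradiction for $k\ge k_0(\mathrm{rank}\,E, c_i(E), H_j)$. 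Without this construction, "apply the Bogomolov inequality to $F_D$" has no content: the inequality constrains discriminants of sheaves, not slopes of subsheaves of a restriction. (Your closing guess about Langer's correction is also off the mark: the delicate point is which hypotheses on $D$ --- precisely conditions (1) and (2) on torsion-freeness of the restricted sheaf and of its Jordan--H\"older factors --- are needed for the statement to hold for an arbitrary, rather than generic, smooth member of $|kH_1|$.)
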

As in the traditional proof of the construction of the moduli space of Gieseker-semistable sheaves explained for example in \cite{Bible}, using boundedness we parametrise slope-semistable sheaves by a locally closed subscheme $R^{\mu\text{-}ss}$ of a suitable Quot-scheme, whose universal family we denote by $\mathscr{F}$. Isomorphism classes of slope-semistable sheaves correspond to orbits of a special linear group $G$ in $R^{\mu\text{-}ss}$. We then consider the determinant line bundle $\lambda_\mathscr{F}$ on $R^{\mu\text{-}ss}$ and aim to show that it is generated by $G$-invariant global sections, at least after passing to the semi-normalisation. 

Possibly after replacing the divisors $H_1, \dots H_{n-1}$ by high enough multiples, by Theorem~\ref{prop:slopeMR} above for every sheaf $E$ contained in the family $R^{\mu\text{-}ss}$ there exists a smooth complete intersection curve $C$ for the linear systems $|H_1|, \dots, |H_{n-1}|$ such that the restricted sheaf $E|_C$ is semistable (note that in order to define semistability on $C$ no further choice of polarisation is necessary, cf.~ the discussion at the beginning of the Introduction). Moreover, using the fact that on the curve $C$ the moduli space of semistable vector bundles can be constructed using Geometric Invariant Theory (whose definition of semistability requires certain invariant sections to exist) and a determinant line bundle computation comparing $\lambda_\mathscr{F}$ with the determinant line bundle of the restricted family $\mathscr{F}|_{C \times R^{\mu\text{-}ss}}$, one sees that at least over the locus in $R^{\mu\text{-}ss}$ where the restricted family stays flat there exists a $G$-invariant section in some power of $\lambda_\mathscr{F}$ that does not vanish on the orbit corresponding to $E$ in $R^{\mu\text{-}ss}$. This is the point at which our restriction to semi-normal parameter spaces comes into play: in order to overcome the difficulty created by the lacking flatness of restricted families, we pass to the semi-normalisation and show that sections in powers of $\lambda_\mathscr{F}$ extend continuously, and hence holomorphically, over the non-flat locus.

 The moduli space $\mathcal{M}^{\mu\text{-}ss}(\Lambda)$ then arises as the Proj-scheme of a ring of $G$-invariant sections of powers of $\lambda_\mathscr{F}$ over the semi-normalisation of $R^{\mu\text{-}ss}$. Afterwards, the universal properties are established using the $G$-equivariant geometry of $R^{\mu\text{-}ss}$. 
\subsection{Geometry of the moduli space}\label{subsect:geometry_of_slopemoduli}
The fact that $\mathcal{M}^{\mu\text{-}ss}(\Lambda)$ is a compactification of the moduli space of slope-stable bundles can be generalised to the following more precise statements about the geometry of $\mathcal{M}^{\mu\text{-}ss}(\Lambda)$, which should be compared with the two-dimensional situation, as described for example in \cite[Thm.~8.2.11 and Rem.~8.2.17]{Bible}.

Let $E$ be a $(H_1, \dots, H_{n-1})$-slope-semistable torsion-free sheaf on $X$. Then, $E$ has a Jordan-H\"older filtration, cf.~\cite[Thm.~1.6.7]{Bible}, such that the associated graded sheaf $gr^\mu E$ is a direct sum of torsion-free stable sheaves. The double-dual $E^\sharp := (gr^\mu E)^{**}$ sits in an exact sequence
\[0 \to gr^\mu E \to E^\sharp \to Q_E \to 0,\]
where $Q_E$ is a torsion sheaf supported in codimension two or higher. Using a natural ``Quot-to-Chow''-morphism, from $Q_E$ we can compute a support Chow-cycle of dimension $n-2$, which we denote by $C_E$. Let now $E$ and $F$ be two semistable sheaves. Assume that either $E^\sharp \not \cong F^\sharp$ or $C_E \neq C_F$. Then, $E$ and $F$ give rise to different points in $\mathcal{M}^{\mu\text{-}ss}(\Lambda)$. If the natural Quot-to-Chow-morphism used to define $Q_E$ has connected fibres, the moduli space exactly parametrises pairs $(E^\sharp, C_E)$ as above. While this connectedness holds in dimension two due to a result of Ellingsrud-Lehn \cite{EllLehn}, the same statement fails in higher dimensions, as can be seen by looking at Example~\ref{ex:Hilb-to-Chow} below. 

On the other hand, if $X$ is a threefold (for example $\mathbb{P}^3$) and $E$ is some fixed stable bundle on $X$, twisting with the ideal sheaf of varying points in $X$ yields a family of semistable sheaves that will not be separated by the corresponding moduli space, as the determinant line bundle used to define $\mathcal{M}^{\mu\text{-}ss}(\Lambda)$ is insensitive to the change in codimension three. The necessary computation is similar to \cite[Ex.~8.1.3(i) and Ex.~8.1.8(ii)]{Bible}.

\begin{example}[Non-connected fibres of Hilb-to-Chow morphisms]\label{ex:Hilb-to-Chow}
Let $\dim X\ge 4$, let $H$ be any ample polarisation, and let $Z$ be some subscheme of $X$ of codimension at least three. Denote by $P:=P_H(\mathscr{I}_Z)$ the Hilbert polynomial of the ideal sheaf of $Z$ with respect to $H$. Then, the corresponding Gieseker moduli space $\mathcal{M}_H(P)$ of semistable sheaves with Hilbert polynomial $P$ coincides with the Hilbert scheme $\Hilb_{P(\mathscr{O}_X)-P}(X)$.  For each such ideal sheaf the associated codimension-two cycle is the zero cycle, and thus the whole $\mathcal{M}_H=\Hilb_{P(\mathscr{O}_X)-P}(X)^{red}$ is naturally mapped to a single point in $\mathrm{Chow}_{n-2}(X)$. This map factors through the  corresponding moduli space of slope-semistable sheaves $\mathcal{M}^{\mu\text{-}ss} (\mathscr{O}_X)$, which consists of precisely $m$ simple points, where $m$ is the number of connected components of $\Hilb_{P(\mathscr{O}_X)-P}(X)$. Thus, $\mathcal{M}^{\mu\text{-}ss}(\mathscr{O}_X)$ appears here as the Stein factorisation of a Hilb-to-Chow morphism.  Looking at hypersurfaces in $\mathbb{P}^n$ containing finitely many lines we obtain explicit examples in which $m > 1$, i.e., in which the fibres of the Hilb-to-Chow morphism are not connected.
\end{example}

\begin{example}[Chow varieties]\label{ex:moduliequalsChow}
 As a variation of the previous example one may consider the case of Hilbert schemes of codimension-two subschemes $Z \subset X$. If the corresponding Hilb-to-Chow morphism has connected fibres, $\mathcal{M}^{\mu\text{-}ss}(\mathscr{O}_X)$ coincides with $\mathrm{Chow}_{n-2}(X)$.
\end{example}

\section{Variation of Gieseker moduli spaces}
We have seen that one way to deal with the non-linearity of the chamber structure in $\Amp(X)_{\mathbb R}$ that appears in the change-of-polarisation problem is to pass to a different space (namely $\Pos(X)_{\mathbb R}\subset N_{1}(X)_{\mathbb R}$) in which the problem is more tractable. While for slope-stability this approach does not destroy any information, from the point of view of Gieseker-stability it ignores everything but the first non-trivial coefficient of the (reduced) Hilbert polynomial. In this section, will see that the variation problem for Gieseker-stability can be tackled by moving to a space of stability conditions that depend on several polarisations.

\subsection{Gieseker and multi-Gieseker stability}\label{subsect:MultiVariation}
 Recall a torsion-free coherent sheaf $E$ is said to be \emph{Gieseker-(semi)stable} (with respect to a given ample line bundle $L$) if for all non-trivial coherent proper subsheaves $F$ of $E$  it holds that
\begin{equation*}
 \frac{\chi(F\otimes L^n)}{\rank(F)} < (\le) \frac{\chi(E\otimes L^n)}{\rank(E)}\quad \text{ for all } n\gg 0.\label{eq:defGiesekerstability}
 \end{equation*}
This extends to a definition involving several ample line bundles as follows:  let $L_{1},\ldots,L_{j_{0}}$ be ample line bundles on $X$ and $B_{1},\ldots,B_{j_{0}}$ be a further collection of line bundles, and suppose   $\sigma=(\sigma_1,\ldots,\sigma_{j_0})$ is a non-zero vector of non-negative real numbers.  We shall say a torsion-free coherent sheaf $E$ on $X$ is \emph{multi-Gieseker-(semi)stable} with respect to this data if for all non-trivial coherent proper subsheaves $F$ of $E$ it holds that
\begin{equation*}
 \frac{\sum_{j} \sigma_j \chi(F\otimes B_{j}\otimes L_{j}^n)}{\rank(F)} < (\le)  \frac{\sum_{j} \sigma_j \chi(E\otimes B_{j}\otimes L_{j}^n)}{\rank(E)}\quad \text{ for all }n\gg 0.\label{eq:defmultiGiesekerstability}
 \end{equation*}
 We stress that we allow the $\sigma_{j}$ to be irrational, but for the above Euler characteristic to make sense we require $L_{j}$ and $B_{j}$ to be genuine (i.e.\ integral) line bundles.  Since we will be thinking of the $L_{j}$ and $B_{j}$ as fixed, we will sometimes refer to this as \emph{$\sigma$-stability}.  Thus, if all the $B_{j}$ are trivial, multi-Gieseker stability is simply a convex combination of Gieseker-stability for each of the $L_{j}$ individually, and thus naturally interpolates between the individual notions of Gieseker-stability as the $\sigma_{j}$ vary. Moreover, there is a close connection to the notion of slope-stability as discussed in previous sections:

\begin{lemma}[Comparison between slope and multi-Gieseker-stability]\label{lemma:slope_Gieseker_relation}
With notation and setup as above, set
$$ \gamma = \sum\nolimits_j \sigma_j c_1(L_j)^{\dim X-1} \in N_1(X)_\mathbb{R}.$$
Then, for any torsion-free coherent sheaf $E$ the following implications hold
\begin{center}
slope-stable with respect to $\gamma$ $\Rightarrow$ stable with respect to $\sigma$ $\Rightarrow$ semistable with respect to $\sigma$ $\Rightarrow$ slope-semistable with respect to $\gamma$.
\end{center}
\end{lemma}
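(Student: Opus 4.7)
The plan is to expand the multi-Hilbert polynomial
$$P^\sigma_F(m) := \sum_j \sigma_j \chi(F\otimes B_j\otimes L_j^m)$$
in powers of $m$ via Hirzebruch--Riemann--Roch, and then to compare the top two coefficients of $P^\sigma_F$ and $P^\sigma_E$ whenever $F\subset E$ is a proper non-trivial subsheaf. Applying HRR to each summand gives
$$\chi(F\otimes B_j\otimes L_j^m) = \int_X \ch(F)\,\ch(B_j)\,e^{m c_1(L_j)}\,\Todd(X),$$
so that, with $n:=\dim X$, the coefficient of $m^n$ is $\rank(F)\,c_1(L_j)^n/n!$, while the coefficient of $m^{n-1}$ takes the form $\frac{1}{(n-1)!}\bigl(c_1(F)+\rank(F)\,T_j\bigr)\cdot c_1(L_j)^{n-1}$, where $T_j$ depends only on $B_j$ and on $\Todd(X)$. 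Summing with weights $\sigma_j$ and dividing by $\rank(F)$, the reduced polynomial takes the shape
$$\frac{P^\sigma_F(m)}{\rank(F)} = A(\sigma)\,\frac{m^n}{n!} + \bigl(\mu_\gamma(F) + B(\sigma)\bigr)\,\frac{m^{n-1}}{(n-1)!} + O(m^{n-2}),$$
where $A(\sigma)=\sum_j \sigma_j c_1(L_j)^n$ and $B(\sigma)=\sum_j \sigma_j T_j\cdot c_1(L_j)^{n-1}$ are independent of $F$.

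Given this expansion, the middle implication is tautological, and the outer two reduce to a lexicographic comparison of coefficients. For any proper non-trivial subsheaf $F\subset E$ the top coefficients of the reduced polynomials for $F$ and $E$ coincide, and the next coefficients differ by exactly $(\mu_\gamma(F)-\mu_\gamma(E))/(n-1)!$. Hence slope-stability of $E$ with respect to $\gamma$ forces $\mu_\gamma(F)<\mu_\gamma(E)$, which in turn yields $P^\sigma_F(m)/\rank(F) < P^\sigma_E(m)/\rank(E)$ for all $m\gg 0$, i.e.\ $\sigma$-stability. Conversely, if a $\sigma$-semistable $E$ admitted a subsheaf $F$ with $\mu_\gamma(F)>\mu_\gamma(E)$, the same expansion would reverse the inequality for large $m$ and contradict $\sigma$-semistability, so $E$ must be slope-semistable with respect to $\gamma$.

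I do not expect any serious obstacle; the argument is the standard \emph{compare at the top differing coefficient} technique applied to a direct Riemann--Roch calculation. The only mild point to check is the case of a proper subsheaf $F\subset E$ of full rank, where $E/F$ is a non-zero torsion sheaf and $c_1(E)-c_1(F)$ is the class of a non-zero effective divisor $D$; then $\mu_\gamma(E)-\mu_\gamma(F) = D\cdot\gamma/\rank(E) > 0$, because $\gamma$ is a non-zero non-negative combination of $(n-1)$-fold self-intersections of ample classes and $D\cdot c_1(L_j)^{n-1}>0$ by ampleness of each $L_j$. This handles the same-rank case uniformly for both implications, completing the proof plan.
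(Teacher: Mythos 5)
Your overall strategy --- expanding $\sum_j \sigma_j \chi(\,\cdot\,\otimes B_j\otimes L_j^m)$ by Riemann--Roch and comparing the reduced polynomials coefficientwise, after observing that the leading coefficient is independent of the sheaf and that the degree-$(n-1)$ coefficient is $\mu_\gamma(\,\cdot\,)$ plus an $F$-independent constant --- is the right one, and it correctly settles the middle implication, the last implication, and the intermediate-rank part of the first implication. The gap is in your treatment of full-rank proper subsheaves $F\subsetneq E$, which the definition of $\sigma$-stability does test. You assert that $c_1(E)-c_1(F)$ is the class of a \emph{non-zero} effective divisor; this is false in general, since the torsion quotient $E/F$ may be supported entirely in codimension at least two (take $F=\mathscr{I}_Z\cdot E$ for a point or curve $Z$), in which case $c_1(F)=c_1(E)$ and $\mu_\gamma(F)=\mu_\gamma(E)$. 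Then the top two coefficients of the reduced polynomials of $F$ and $E$ coincide, so the lexicographic comparison you set up is inconclusive and does not yield the strict inequality required for $\sigma$-stability. Note also that slope-stability gives no help here: it is tested only on subsheaves of intermediate rank (this is how the definition in Section 2.1 must be read, consistently with the Introduction, since otherwise no sheaf would ever be slope-stable), so the full-rank case must be handled by a separate mechanism.

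The repair is standard but genuinely different from a top-two-coefficient comparison: for $\rank(F)=\rank(E)=:r$ and $T:=E/F\neq 0$ torsion, write
\[
\frac{P^\sigma_F(m)}{r} \;=\; \frac{P^\sigma_E(m)}{r} \;-\; \frac{1}{r}\sum\nolimits_j \sigma_j\,\chi\bigl(T\otimes B_j\otimes L_j^m\bigr),
\]
and observe that each $\chi(T\otimes B_j\otimes L_j^m)$ is a polynomial in $m$ of degree $\dim\mathrm{Supp}(T)$ with positive leading coefficient because $L_j$ is ample; since the $\sigma_j$ are non-negative and not all zero, the subtracted term is strictly positive for $m\gg 0$, which gives $p^\sigma_F(m)<p^\sigma_E(m)$ for $m\gg 0$ regardless of the codimension of $\mathrm{Supp}(T)$. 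With this substituted for your divisor argument, the proof is complete; the Riemann--Roch expansion, the identification of the subleading coefficient, and the contrapositive argument for the final implication are all correct as written.
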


Given this, we now state the main results from \cite{GRTI,GRTII}, which for simplicity we do under the additional assumption that $X$ has dimension 3 (in fact all these results hold in some form for singular varieties of all dimension under some additional hypotheses). We fix the topological type (i.e., Chern classes) of the sheaves in question, which we tacitly assume has been done from now on.  

\begin{theorem}[Existence of projective moduli spaces]\label{thm:multiGieseker_moduli_exist} 
There exists a projective moduli space $\mathcal M_{\sigma}$ of multi-Gieseker-semistable sheaves.
\end{theorem}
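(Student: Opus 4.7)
My plan is to adapt the Álvarez-Cónsul–King quiver embedding strategy for ordinary Gieseker moduli spaces to the multi-polarised setting, realising $\mathcal{M}_\sigma$ as a GIT quotient of (an appropriate locally closed subscheme of) a moduli space of representations of a Kronecker-type quiver, hence projective.

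\noindent\textbf{Step 1 (Boundedness).} First I would establish that the family of multi-Gieseker-$\sigma$-semistable sheaves with fixed Chern classes is bounded. This follows by combining Lemma~\ref{lemma:slope_Gieseker_relation}, which says $\sigma$-semistability implies slope-semistability with respect to $\gamma=\sum_j\sigma_j c_1(L_j)^{n-1}\in\mathrm{Pos}(X)_{\mathbb R}$, with Theorem~\ref{thm:K^+}.

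\noindent\textbf{Step 2 (Embedding into quiver representations).} Using boundedness, I would choose integers $n\ll m$, uniformly for all $\sigma$-semistable $E$, large enough that each $E\otimes B_j\otimes L_j^n$ and $E\otimes B_j\otimes L_j^m$ is globally generated with vanishing higher cohomology, and so that a Le Potier–Simpson type bound controls the Hilbert polynomial of all subsheaves. Define a quiver $Q$ with $2j_0$ vertices, labelled $(n,j)$ and $(m,j)$ for $1\le j\le j_0$, and $\dim H^0(X,L_j^{m-n})$ arrows from $(n,j)$ to $(m,j)$ for each $j$. To a sheaf $E$ associate the representation with vector spaces $V_{n,j}=H^0(E\otimes B_j\otimes L_j^n)$, $V_{m,j}=H^0(E\otimes B_j\otimes L_j^m)$ and arrows given by the multiplication maps $V_{n,j}\otimes H^0(L_j^{m-n})\to V_{m,j}$. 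The dimension vector is determined by the chosen topological invariants.

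\noindent\textbf{Step 3 (Matching the two stability notions).} Next I would pick a stability weight $\theta=\theta(\sigma,n,m)$ on representations of $Q$, essentially chosen so that the difference between the two sides of the $\theta$-stability inequality mimics, up to a positive multiple, the difference between the two sides of the $\sigma$-Gieseker inequality evaluated at $n$ versus $m$. The key assertion is that for $n$ and then $m$ chosen sufficiently large (uniformly), a sheaf $E$ with the fixed invariants is $\sigma$-(semi)stable if and only if the associated quiver representation is $\theta$-(semi)stable, and moreover the only $\theta$-(semi)stable representations satisfying the natural compatibility (commutativity/functoriality) conditions arise from sheaves. In particular, destabilising subsheaves correspond bijectively to destabilising sub-representations once $n,m$ are large enough.

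\noindent\textbf{Step 4 (GIT construction).} With this translation in hand I would invoke King's construction of moduli spaces of $\theta$-(semi)stable representations of $Q$ as a GIT quotient of a product of $\mathrm{Hom}$-spaces by a product of $\mathrm{GL}$-groups. This gives a projective moduli space $\mathcal{M}_\sigma^{Q}$. The locus of representations coming from sheaves is cut out by closed (commutativity) conditions on an open (base-point-freeness of the multiplication maps) subscheme, so the closed image inside $\mathcal{M}_\sigma^Q$ is the desired projective moduli space $\mathcal{M}_\sigma$; universal properties follow from the $\mathrm{GL}$-equivariant geometry of the parameter scheme exactly as in the classical story.

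\noindent\textbf{Main obstacle.} The subtlest step is Step~3: unlike the single-polarisation ACK framework, here one must show that a \emph{single} choice of the cut-off integers $(n,m)$ works simultaneously for all comparisons appearing in the convex combination defining $\sigma$-stability, so that potential destabilisers $F\subset E$ are detected uniformly across all the $L_j$. This requires a Le Potier–Simpson estimate that is uniform in the topological type of subsheaves and also robust against the irrationality of the $\sigma_j$; the latter forces one to work with real (rather than rational) stability weights on the quiver side and to check that King's GIT construction still applies. Overcoming these uniformity issues, and ensuring strict compatibility of $\sigma$- and $\theta$-(semi)stability (rather than mere implication in one direction), is where the bulk of the technical work lies.
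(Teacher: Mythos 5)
Your overall strategy (boundedness, embedding into quiver representations, matching of stabilities, King's GIT) is indeed the one followed in the paper, but there is a genuine gap in your Step~2 that propagates fatally into Step~3: your quiver has arrows only from $(n,j)$ to $(m,j)$, i.e.\ it is a disjoint union of $j_{0}$ Kronecker quivers, with no arrows relating the different polarisations. For such a quiver, for each fixed $j$ the pair $(V_{n,j},V_{m,j})$ placed at the $j$-th pair of vertices and zero elsewhere is a subrepresentation of $\Phi(E)$, of slope roughly $P_{L_j}(n)/P_{L_j}(m)$; since the slope of the whole representation is a weighted mediant of these ratios, at least one of these ``single-row'' subrepresentations has slope $\ge\mu(\Phi(E))$, generically strictly, so $\Phi(E)$ is unstable no matter how stable $E$ is, and the equivalence asserted in Step~3 cannot hold. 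The functor $\Phi$ also fails to be fully faithful, since $\Hom(\Phi(E),\Phi(F))$ then contains maps acting independently on the different rows. The fix, which the paper emphasises is crucial, is to add \emph{diagonal} arrows: for every pair $i,j$ one takes $\dim H^0(L_i^{-n}\otimes L_j^{m})$ arrows from vertex $(n,i)$ to vertex $(m,j)$, realised on $\Phi(E)$ by the multiplication maps $H^0(E\otimes L_i^n)\otimes H^0(L_i^{-n}\otimes L_j^m)\to H^0(E\otimes L_j^m)$. These arrows eliminate the spurious single-row subrepresentations and make $\Phi$ fully faithful on $n$-regular sheaves (Theorem~\ref{thm:categoryembedding}), which is what allows destabilising subrepresentations to be matched with destabilising subsheaves (Theorem~\ref{thm:semistabilitycomparison}).

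Two smaller points. First, in Step~4 you pass to ``the closed image'' of the locus of representations coming from sheaves; this locus is only locally closed, and taking its closure does not obviously yield a moduli space of \emph{sheaves}, since boundary points need not correspond to sheaves. The paper instead shows this locus is saturated with respect to the GIT quotient, so that $\mathcal M_{\sigma}$ exists as a quasi-projective scheme, and then proves properness separately by a Langton-type valuative argument. Second, the irrationality of the $\sigma_j$ that you flag as an obstacle is handled not by extending King's GIT to real weights but by the finite linear \emph{rational} chamber structure on $\sigma$-space (Theorem~\ref{thm:multiGieseker_chamber_structure}), which permits perturbing $\sigma$ to a rational parameter without changing the notion of semistability.
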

As is clear, the moduli space $\mathcal M_L$ of Gieseker-semistable sheaves with respect to a single ample line bundle $L$ is a special case of this construction (simply taking all but one of the $\sigma_{j}$ to be zero).   Moreover, just as for $\mathcal M_L$, the moduli spaces $\mathcal M_{\sigma}$ contain an open set parameterising stable sheaves, and the points on the boundary correspond to $S$-equivalence classes of sheaves.  

In the spirit of this paper, our interest in $\mathcal M_{\sigma}$ really comes from how it changes as $\sigma$ varies in $\mathbb R^{j_{0}}_{{\ge 0}}\setminus\{0\}$. The following result, which is proven in \cite[Sect.~4]{GRTI}, shows that by passing from the ample cone to $\sigma$-space, the non-linearity and non-rationality issues encountered before disappear. This in turn should be compared with the strategy employed in the case of slope-semistability described in Section~\ref{subsect:chamberstructure_on_Pos}.

\begin{theorem}[Chamber structure]\label{thm:multiGieseker_chamber_structure}
Let $X$ be a smooth projective threefold and let $L_{1},\ldots,L_{j_{0}}$ be fixed ample line bundles on $X$.  Then, the set $\mathbb R^{j_{0}}_{\ge 0}\setminus\{0\}$ is cut into chambers by a finite number of linear rational walls, such that the notion of semistability as well as the induced Jordan-H\"older filtrations are unchanged as $\sigma$ varies in the interior of a single chamber. \end{theorem}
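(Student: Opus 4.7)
The strategy follows the standard template for chamber-structure results, adapted to the multi-Gieseker setting.  By homogeneity of the stability condition under positive rescaling of $\sigma$, it suffices to construct finitely many walls inside the compact simplex $K:=\{\sigma\in\mathbb{R}^{j_0}_{\ge 0}:\sum_{j}\sigma_{j}=1\}$.  Throughout write $r=\rank(E)$, $r'=\rank(F)$, and $d=\dim X=3$.

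\textbf{Step 1 (Boundedness of semistable sheaves).}  First I would show that the family $\mathcal{S}_{K}$ of torsion-free sheaves of the fixed topological type which are $\sigma$-semistable for some $\sigma\in K$ is bounded.  By Lemma~\ref{lemma:slope_Gieseker_relation}, every such $E$ is slope-semistable with respect to the curve class $\gamma(\sigma)=\sum_{j}\sigma_{j}c_{1}(L_{j})^{d-1}$.  Since $\gamma(K)$ is a compact subset of the convex hull of $[L_{1}]^{d-1},\ldots,[L_{j_{0}}]^{d-1}$, the desired boundedness follows from Theorem~\ref{thm:boundedness_II}, whose hypothesis is met because $d=3$.

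\textbf{Step 2 (Boundedness of potential destabilisers).}  Next I would bound the possible Chern characters of subsheaves $F\subsetneq E$ which can have the same reduced multi-Gieseker Hilbert polynomial as $E$ for some $\sigma\in K$.  Comparing the leading coefficients in $n$ forces $\mu_{\gamma(\sigma)}(F)=\mu_{\gamma(\sigma)}(E)$ at that $\sigma$.  Replacing $F$ by its Harder-Narasimhan factors with respect to $\mu_{\gamma(\sigma)}$, each factor is slope-semistable of slope controlled by $E$, and hence falls under Step~1 applied to the appropriate topological types.  In particular, only finitely many Chern characters $c(F)$, and hence only finitely many polynomials $n\mapsto \chi(F\otimes B_{j}\otimes L_{j}^{n})$, arise.

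\textbf{Step 3 (Linearity and rationality of walls).}  For each pair $(c(E),c(F))$ produced in Step~2, introduce
$$\Delta_{c(E),c(F)}(\sigma;n):=\sum_{j=1}^{j_{0}}\sigma_{j}\bigl(r\,\chi(F\otimes B_{j}\otimes L_{j}^{n})-r'\,\chi(E\otimes B_{j}\otimes L_{j}^{n})\bigr),$$
a polynomial in $n$ whose coefficients depend linearly on $\sigma$ with integer coefficients (the integers coming from Riemann-Roch applied to the fixed Chern data on $X$).  The corresponding wall is the linear subspace of $\mathbb{R}^{j_{0}}$ cut out by setting each of these coefficients equal to zero; it is rational.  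By Step~2 only finitely many such walls meet $K$.

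\textbf{Step 4 (Invariance within chambers).}  Finally, inside any open chamber of $K$ cut out by these walls, the lex-ordering of $\Delta_{c(E),c(F)}(\sigma;n)$ in $n$, i.e.\ the sign of its first non-vanishing coefficient, is constant, since a sign change would require crossing one of the walls.  Hence whether a given pair $(E,F)$ is semistabilising is locally constant on chambers, so $\sigma$-(semi)stability does not change.  The same reasoning applied to subsheaves realising Jordan-H\"older factors shows that the full JH filtrations of semistable sheaves are preserved as well.

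\textbf{Main obstacle.}  The truly non-trivial ingredients are the boundedness statements in Steps~1 and~2; this is where the hypothesis $\dim X=3$ enters, via Theorem~\ref{thm:boundedness_II}.  Once these are in hand, Steps~3 and~4 are of an essentially formal nature, relying only on the linearity of the Euler characteristic in $\sigma$ and on comparing leading coefficients of polynomials in $n$.
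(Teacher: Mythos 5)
Your overall strategy --- reduce to the simplex, establish boundedness of the semistable sheaves via Lemma~\ref{lemma:slope_Gieseker_relation} and Theorem~\ref{thm:boundedness_II} (which is exactly where $\dim X=3$ enters), bound the destabilising subsheaves, and then exploit that the coefficients of the multi-Hilbert polynomial are linear in $\sigma$ with rational coefficients --- is the same as the one carried out in \cite[Sect.~4]{GRTI}, to which this survey defers the proof. However, two points need repair. First, in Step~3 you define the wall attached to a pair $(c(E),c(F))$ as ``the linear subspace cut out by setting each of these coefficients equal to zero'', i.e.\ the \emph{common} zero locus of all the coefficients $a_i(\sigma)$ of $\Delta(\sigma;n)$. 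That locus has codimension up to $\dim X+1$ and is not what controls the change of semistability: the lexicographic sign of the vector $\bigl(a_1(\sigma),\ldots\bigr)$ can flip without ever passing through the common zero locus (already for two coefficients, $(a_1,a_2)=(t,1)$ changes lex-sign at $t=0$ while never vanishing entirely). The walls must be taken to be the individual rational hyperplanes $\{a_i(\sigma)=0\}$ for each non-identically-zero coefficient; on a component of their complement every $a_i$ has constant sign, hence the lex-sign is constant, and only then does your Step~4 go through. This also matches the theorem's assertion that the walls are \emph{linear} (codimension-one) and rational.

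Second, Step~2 as written is close to circular: you invoke ``Step~1 applied to the appropriate topological types'' of the Harder--Narasimhan factors of $F$, but Step~1 bounds sheaves of a \emph{fixed} topological type, whereas the whole point is to show that only finitely many topological types of saturated destabilisers occur. What is actually needed here is a Grothendieck-type boundedness lemma: within the bounded family of Step~1, the saturated subsheaves $F\subsetneq E$ whose slopes $\mu_{\gamma(\sigma)}(F)$ are bounded below (by $\mu_{\gamma(\sigma)}(E)$, as forced by comparing subleading coefficients) and above (by $\mu_{\max}$ of the family) form a bounded family, whence finitely many Chern characters and finitely many polynomials $n\mapsto\chi(F\otimes B_j\otimes L_j^n)$. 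You correctly identify this as the main obstacle, but the reduction you sketch does not yet supply it. A minor further remark: the coefficients produced by Riemann--Roch are rational rather than integral, which is all that is needed for the walls to be rational.
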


Using this, we can compare the moduli spaces of Gieseker-stable sheaves with respect to different ample polarisations.  We first state a preliminary version of the corresponding result, see \cite[Thm.~12.1]{GRTI}.

\begin{theorem}[Variation of Gieseker moduli spaces on threefolds]\label{thm:variation_on_threefolds}
Let $X$ be a smooth projective threefold and let $L_{1},L_{2}$ be ample line bundles on $X$.    Then, the moduli spaces $\mathcal{M}_{L_1}$ and $\mathcal{M}_{L_2}$ of sheaves that are Gieseker-semistable with respect to $L_1$ and $L_2$, respectively, are related by a finite number of Thaddeus-flips.
\end{theorem}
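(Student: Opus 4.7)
The plan is to deduce the result by interpolating between the two Gieseker stabilities using multi-Gieseker stability and then analysing the finitely many wall-crossings that occur along the way via variation of GIT on a moduli space of quiver representations.

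More precisely, with $j_0=2$ and the chosen ample $L_1,L_2$ (taking all twisting bundles $B_j$ trivial), the points $\sigma_{(0)}:=(1,0)$ and $\sigma_{(1)}:=(0,1)$ in $\mathbb R^{2}_{\ge 0}\setminus\{0\}$ correspond to ordinary Gieseker stability with respect to $L_1$ and $L_2$ respectively, so $\mathcal M_{\sigma_{(0)}}\cong \mathcal M_{L_1}$ and $\mathcal M_{\sigma_{(1)}}\cong \mathcal M_{L_2}$. Consider the segment $\sigma(t)=(1-t,t)$ for $t\in[0,1]$. Theorem~\ref{thm:multiGieseker_chamber_structure} applied to this segment (or more safely to a compact neighbourhood of it bounded away from the origin) guarantees that $\sigma(t)$ meets only finitely many linear rational walls, so the segment is divided into finitely many open chambers $0=t_0<t_1<\cdots<t_N=1$, on each of which the notion of semistability (hence the moduli space) is constant. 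It thus suffices to show that crossing a single wall at some $t_i$ produces a Thaddeus-flip between $\mathcal M_{\sigma(t_i-\varepsilon)}$ and $\mathcal M_{\sigma(t_i+\varepsilon)}$.

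For this, I would exploit the fact (implicit in Theorem~\ref{thm:multiGieseker_moduli_exist} and at the heart of the construction in \cite{GRTI,GRTII}) that $\mathcal M_\sigma$ is obtained as a GIT quotient of a parameter space of representations of a Beilinson-type quiver, where the GIT linearisation depends linearly on $\sigma$. Boundedness of the relevant family of sheaves along the whole segment (which follows from Theorems~\ref{thm:K^+} and \ref{thm:boundedness_II}, using that $\dim X=3$) allows one to work with a single parameter space for all $\sigma$ simultaneously. The comparison in Lemma~\ref{lemma:slope_Gieseker_relation} ensures multi-Gieseker semistability is sandwiched between slope-(semi)stabilities for a class in $\mathrm{Pos}(X)_{\mathbb R}$, which controls what happens at the endpoints where one $\sigma_j$ vanishes. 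Once everything is inside a common GIT problem, the wall-and-chamber decomposition of $\sigma$-space of Theorem~\ref{thm:multiGieseker_chamber_structure} coincides (or is refined by) the GIT wall-and-chamber structure on the space of linearisations, and classical VGIT (Thaddeus, Dolgachev--Hu) then identifies the change across each wall with a Thaddeus-flip through a master space.

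The main obstacle I expect is the bookkeeping needed to translate the sheaf-theoretic walls into genuine VGIT walls for the quiver GIT problem on the \emph{same} parameter scheme: one must check that the GIT-(semi)stable loci in the parameter scheme corresponding to $\sigma(t_i\pm\varepsilon)$ really coincide with the loci parametrising $\sigma(t_i\pm\varepsilon)$-semistable sheaves, and that the two loci differ only along a closed subscheme of strictly $\sigma(t_i)$-semistable points whose GIT quotient is the intermediate moduli space through which the flip factors. A secondary, technical point is to ensure the segment can be perturbed, if needed, to meet the walls one at a time (avoiding non-generic intersections), which is possible since the walls are linear rational and finite in number, and such a perturbation does not affect the endpoints $\mathcal M_{L_1}$ and $\mathcal M_{L_2}$.
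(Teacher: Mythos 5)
Your overall strategy -- interpolate in $\sigma$-space, use the finite linear rational chamber structure of Theorem~\ref{thm:multiGieseker_chamber_structure}, realise all the moduli spaces $\mathcal M_{\sigma(t)}$ as GIT quotients of one fixed parameter scheme of quiver representations, and then invoke Thaddeus/Dolgachev--Hu -- is exactly the route taken here and in \cite{GRTI}. Your two flagged ``obstacles'' (saturation of the locus $\{\Phi(E)\}$ with respect to the GIT quotient, and matching sheaf-theoretic semistability with GIT semistability) are precisely the content of Theorem~\ref{thm:semistabilitycomparison} together with the boundedness input from Theorem~\ref{thm:boundedness_II} (which is where $\dim X=3$ enters), so that part of the plan is sound.

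The genuine gap is at the endpoints of your segment. You assert that $\sigma_{(0)}=(1,0)$ and $\sigma_{(1)}=(0,1)$ ``correspond to ordinary Gieseker stability'' and then treat them as ordinary points of the common GIT problem. As notions of semistability this is true by definition, but the comparison between sheaf semistability and quiver-GIT semistability -- in particular the compatibility with Jordan--H\"older filtrations, which is what guarantees that closed orbits in the master space correspond to $S$-equivalence classes of sheaves, so that the GIT quotient really is $\mathcal M_\sigma$ -- is only established for $\sigma$ with \emph{all} $\sigma_j$ strictly positive (see the final clause of Theorem~\ref{thm:semistabilitycomparison} and the Remark following it). So you cannot simply set $\sigma=(1,0)$ to place $\mathcal M_{L_1}$ inside the VGIT picture, and your perturbation remark does not help, because the endpoints are the one thing you are not allowed to perturb: if $(1,0)$ happens to lie on a wall of the chamber decomposition of $\mathbb R^2_{\ge 0}\setminus\{0\}$, the set of semistable sheaves and their JH filtrations for small $t>0$ need not agree with those for $L_1$-Gieseker stability. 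Bridging $\mathcal M_{L_1}$ to $\mathcal M_{\sigma(\varepsilon)}$ (and likewise at $t=1$) is exactly the ``extra step'' the paper says is required to deduce Theorem~\ref{thm:variation_on_threefolds} from Theorem~\ref{thm:semistabilitycomparison}; it is carried out in \cite[Sect.~11--12]{GRTI} and is not supplied by your argument. Also, a minor point: Lemma~\ref{lemma:slope_Gieseker_relation} compares multi-Gieseker stability with \emph{slope} stability for the class $\gamma$, which is a strictly coarser invariant; it does not ``control what happens at the endpoints'' in the sense you need.
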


Here, by a \emph{Thaddeus-flip} we mean a transformation occurring as a change of Geometric Invariant Theory (GIT) stability on a fixed ``master space''.  More precisely, we say two schemes $X^+$ and $X^-$ are related by a Thaddeus-flip if there exists a quasi-projective scheme $R$ with an action of a reductive group $G$ and stability parameters $\sigma_+,\sigma_-,\sigma$ such that there exists a diagram of the form
\[\begin{xymatrix}{
  X^+=R^{ss,\sigma_+}\hq G \ar[rd]_{\psi_+}& & \ar[ld]^{\psi_-} R^{ss,\sigma_-}\hq G=X^-\\
   &  R^{ss, \sigma}\hq G, &  
}
\end{xymatrix}
\]
where  $R^{ss,\sigma}$ denotes the set of points that are GIT-semistable with respect to $\sigma$, and the morphisms $\psi_{\pm}$ are induced by inclusions $R^{ss,\sigma_+}\subset R^{ss,\sigma}\supset R^{ss,\sigma_-}$.    We emphasise that a Thaddeus-flip is not necessarily a flip in the sense of birational geometry, since a priori even if all the spaces involved are non-empty, this transformation could be a divisorial contraction or contract an irreducible component. However, from the theory of Variation of GIT due to Thaddeus \cite{Thaddeus} and Dolgachev-Hu \cite{DolgachevHu} it will consist of a sequence of birational flips under certain circumstances. 

\begin{remark}
We note that while such a result was expected to be true by many experts, as explained in the introduction the methods of Matsuki and Wentworth \cite{MatsukiWentworth} employed in the surface case do not generalise to higher dimensions.   Moreover, the most natural alternative approaches run into problems, as we will indicate in Section~\ref{subsect:quivervariation} below.
\end{remark}

Having established such a variation result, we can be more ambitious and ask for information as to the kind of Thaddeus-flips between such moduli spaces. More specifically, we ask if they can be given a interpretation as moduli spaces of sheaves of some kind.   Although we have not answered this in full generality, we develop in \cite{GRTII} a framework that strongly suggests this is always the case.  In particular, we obtain the following results:

\begin{theorem}[Mumford-Thaddeus Principle through moduli spaces of sheaves]
Let $L_{1},L_{1}$ be ample. Suppose also that either
\begin{enumerate}
\item $X$ has dimension 2, or
\item $X$ has dimension 3 and $L_{1},L_{2}$ are ``separated by a single wall of the first kind''.
\end{enumerate}
Then, the moduli spaces $\mathcal M_{L_{1}}$ and $\mathcal M_{L_{2}}$ are related by a finite number of Thaddeus-flips through moduli spaces of multi-Gieseker semistable sheaves.
\end{theorem}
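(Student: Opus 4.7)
The plan is to connect $\mathcal M_{L_1}$ and $\mathcal M_{L_2}$ through the parameter space of multi-Gieseker stability conditions. Take $j_0=2$, the given polarisations $L_1, L_2$, and $B_1=B_2=\mathscr O_X$, and consider the straight line segment $\sigma(t) = (1-t, t)$ for $t\in [0,1]$. At $t=0$, $\sigma$-stability reduces to $L_1$-Gieseker stability, so $\mathcal M_{\sigma(0)}=\mathcal M_{L_1}$, and similarly $\mathcal M_{\sigma(1)}=\mathcal M_{L_2}$. By Theorem~\ref{thm:multiGieseker_chamber_structure} the segment is cut by only finitely many linear rational walls, say at parameters $0<t_{1}<\ldots<t_{N}<1$, and between consecutive walls neither the notion of $\sigma$-semistability nor the induced Jordan-H\"older filtrations change. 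Thus the moduli space $\mathcal M_{\sigma(t)}$ is constant on each open subinterval, and it suffices to describe a single wall-crossing at $t=t_i$, setting $\sigma_0=\sigma(t_i)$ and $\sigma^\pm=\sigma(t_i\pm\varepsilon)$ for small $\varepsilon>0$.

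For such a wall-crossing, the goal is to realise $\mathcal M_{\sigma^+}$ and $\mathcal M_{\sigma^-}$ as two GIT quotients of a common master space, and to identify the third (central) quotient with the multi-Gieseker moduli space $\mathcal M_{\sigma_0}$. To do so I would appeal to the GIT construction underlying Theorem~\ref{thm:multiGieseker_moduli_exist}: there one builds a Quot-scheme $R$ with an action of a reductive group $G$ and a $G$-equivariant ample line bundle $\mathscr L_\sigma$ depending linearly on $\sigma$, in such a way that $R^{ss}(\mathscr L_\sigma)\hq G \cong \mathcal M_\sigma$. Provided the \emph{same} $R$ and $G$ can be used uniformly for every $\sigma$ in a neighbourhood of $\sigma_0$, the standard comparison between sheaf-theoretic $\sigma^\pm$-semistability and GIT-semistability with respect to $\mathscr L_{\sigma^\pm}$ yields open inclusions $R^{ss}(\mathscr L_{\sigma^\pm}) \subset R^{ss}(\mathscr L_{\sigma_0})$, which descend to morphisms $\psi_\pm\colon \mathcal M_{\sigma^\pm}\to \mathcal M_{\sigma_0}$ and present the triple $(\mathcal M_{\sigma^+}, \mathcal M_{\sigma_0}, \mathcal M_{\sigma^-})$ as a single Thaddeus-flip in the sense of the definition following Theorem~\ref{thm:variation_on_threefolds}.

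The main obstacle is uniform boundedness across the wall: one must ensure a single Quot-scheme parametrises all $\sigma$-semistable sheaves for every $\sigma$ in a neighbourhood of $\sigma_0$, and in particular that all Jordan-H\"older factors of $\sigma_0$-semistable sheaves fit in the same Quot-scheme. In dimension two this is essentially classical, since boundedness of semistable sheaves with fixed Chern classes is known and the Hilbert polynomials involved depend tamely on $\sigma$. In dimension three, by contrast, the Hilbert polynomials of the Jordan-H\"older factors at $\sigma_0$ can vary with $\sigma$ in ways that a fixed Quot-scheme cannot accommodate, and this is precisely what the ``single wall of the first kind'' hypothesis is designed to exclude. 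Once this uniform boundedness is in place, the remaining ingredients --- linearity of $\mathscr L_\sigma$ in $\sigma$, the GIT/sheaf-semistability correspondence for each $\sigma$ near $\sigma_0$, and an application of the variation-of-GIT theory of Thaddeus and Dolgachev-Hu to refine the wall-crossing into elementary birational Thaddeus-flips if desired --- are essentially formal consequences of the construction of $\mathcal M_\sigma$.
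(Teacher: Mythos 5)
Your overall strategy is exactly the one the paper describes: embed everything via the fixed functor $\Phi$ into quiver representations, so that all the moduli spaces $\mathcal M_{\sigma}$ arise as GIT quotients of one master space $R$ by one group $G$ with linearisation depending linearly on $\sigma$, cut the segment in $\sigma$-space by the finitely many linear rational walls of Theorem~\ref{thm:multiGieseker_chamber_structure}, and apply variation of GIT at each wall. You have also correctly located the role of the extra hypotheses in identifying the central quotient at a wall with an honest sheaf moduli space. However, there is a genuine gap at the two endpoints of your segment.

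You assert that $\mathcal M_{\sigma(0)}=\mathcal M_{L_{1}}$ and $\mathcal M_{\sigma(1)}=\mathcal M_{L_{2}}$ because multi-Gieseker stability for $\sigma=(1,0)$ reduces, as a condition on sheaves, to $L_{1}$-Gieseker stability. That is true at the level of the semistability condition, but not at the level of the GIT construction you are invoking: Theorem~\ref{thm:semistabilitycomparison} only guarantees compatibility of $\Phi$ with Jordan-H\"older filtrations when \emph{every} $\sigma_{j}$ is strictly positive, and the paper explicitly warns (in the remark following that theorem) that one cannot set all but one of the $\sigma_{j}$ to zero to directly recover Gieseker stability in this setup; ``extra steps'' are needed. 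Since the closed points of $\mathcal M_{\sigma}$ are $S$-equivalence classes, a mismatch of Jordan-H\"older filtrations at the boundary points $(1,0)$ and $(0,1)$ means the quotient $R^{ss}(\mathscr L_{\sigma(0)})\hq G$ need not be $\mathcal M_{L_{1}}$ as a scheme. One must instead stop at a nearby interior parameter and supply a separate argument (or a perturbation/limiting argument) connecting that interior moduli space to the genuine Gieseker moduli space $\mathcal M_{L_{1}}$; this is precisely one of the ``extra steps'' the paper alludes to, and your proof as written silently skips it. A second, smaller imprecision: you attribute the ``single wall of the first kind'' hypothesis to boundedness of Jordan-H\"older factors, whereas boundedness in dimension three is already covered by Theorem~\ref{thm:boundedness_II}; the hypothesis is rather a genericity condition needed so that the semistability comparison (with uniform regularity parameters $m\gg n\gg p$) holds \emph{at} the wall value $\sigma_{0}$ itself, which is what lets you identify the central GIT quotient with $\mathcal M_{\sigma_{0}}$ rather than with an unnamed quotient as in the weaker Theorem~\ref{thm:variation_on_threefolds}.
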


Part (1) is precisely what was proven by Matsuki and Wentworth in \cite{MatsukiWentworth}.  The definition of being ``separated by a single wall of the first kind'' is a certain genericity assumption on $L_{1}$ and $L_{2}$, for which we refer the reader to \cite[Def.~7.5]{GRTII} (and which is likely not to be necessary).  

\begin{remark}[Compactified Jacobians]\label{rem:Alexeev}
 Another setup in which the results of \cite{GRTII} can be applied is the construction and comparison of compactified Jacobians, see \cite{AlexeevCompactifiedJacobians}: Let $C$ be a reduced nodal curve with irreducible components $C_i$, and let $L$ be an ample line bundle on $C$. For any coherent sheaf $F$ set $\deg F := \chi(F) - \chi(\mathscr{O}_C)$. As $\dim C =1$, slope- and Gieseker semistabilty with respect to $L$ agree, and we will hence just talk about $L$-semistability. Then, the \emph{compactified Jacobian} $\mathrm{Jac}_{d,L}(C)$ is defined to be the moduli space of $L$-semistable sheaves of degree $d$ that have rank $1$ at every generic point of $C$. In this situation, one obtains that as $L$ varies the $\mathrm{Jac}_{d,L}(C)$ are related by Thaddeus flips through moduli spaces of the form $\mathrm{Jac}_{d, L'}(C)$, and that these flips are induced by a fixed master space. Note however that a precise description of this change of moduli spaces has been given in terms of a combinatorial construction of compactified Jacobians, cf.~\cite[Rem.~2.10]{AlexeevCompactifiedJacobians}.
\end{remark}

\subsection{From sheaves to representation of quivers}\label{subsect:quivervariation}

We now sketch the main ideas involved in the construction of the moduli space of multi-Gieseker semistable sheaves and the derivation of the variation results. The general philosophy is adapted from the strategy put forward and executed by \'Alvarez-C\'onsul and King in \cite{ConsulKing}; see also the survey \cite{ConsulKingSurvey}.

Let us first recall the classical construction of the Gieseker moduli space, see \cite[Chap.~4]{Bible}:  For any coherent sheaf $E$ living in a bounded family we can choose $n$ sufficiently large and uniformly over the family so that $E\otimes L^n$ is globally generated, i.e., the evaluation map 
$ H^0(E\otimes L^n) \to E\otimes L^n$
is surjective.  Thus, choosing an isomorphism $H^0(E\otimes L^n)\simeq V$, where $V$ is a fixed vector space of the appropriate dimension, we can thus consider $E \otimes L^n$ as a point in the Quot scheme of the trivial bundle with fibre $V$.  Expanding slightly, letting $H:=H^0(L^{m-n})$,
we have that for $m$ sufficiently large the natural multiplication
\begin{equation}\label{eq:multiplication}
 V \otimes H \simeq H^0(E\otimes L^n) \otimes H  \to H^0(E\otimes L^m)
 \end{equation}
is surjective, and thus gives a point in a Grassmannian of $V\otimes H$.  The different choices of isomorphism correspond to the orbits of this point under the natural $GL(V)$-action,  thus the moduli space desired is the quotient with respect to $GL(V)$. This quotient can be constructed using GIT, and it is at this stage that the stability condition enters. 

So, while the Gieseker moduli spaces with respect to several given ample polarisations are all constructed by GIT, each one of them is the quotient of a different Grassmannian by a different group action. In the situation where one wants to compare these moduli spaces, one could of course consider the product of the Grassmannians and some ``diagonal'' embedding, but then the natural polarisations induced on the product are not ample or even big, and it is not clear how to proceed from here. 

The main insight of \'Alvarez-C\'onsul and King is the observation that it is possible to delay the point at which one picks the rigidifying isomorphism in \eqref{eq:multiplication}. We again refer the reader to the original paper \cite{ConsulKing} and especially to the survey \cite{ConsulKingSurvey} for a thorough exposition of this approach, here we now shall discuss our generalisation of their construction. For simplicity, we restrict to the case of two polarisations; so, suppose $L_{1}$ and $L_{2}$ are very ample line bundles, and $B_{1}=B_{2}=\mathscr{O}_{X}$ are trivial.  For $i,j=1,2$, set $H_{ij}:= H^{0}(L_{i}^{-n}\otimes  L_{j}^{m})$.  Then, to any coherent torsion-free sheaf $E$ and integers $m>n$ we associate the following diagram of linear maps, which we denote by $\Phi(E)$:
\[
\begin{xymatrix}{
   H^0(E\otimes L_1^n)  \ar[rr]|{H_{11}} \ar[rrd]|<<<<<{H_{12}} & & H^0(E\otimes L_1^m) \\
 H^0(E\otimes L_2^n)   \ar[rr]|{H_{22}} \ar[rru]|<<<<<{H_{21}}  & &  H^0(E\otimes L_2^m).}
  \end{xymatrix}
\]
Here, the maps are the natural multiplication maps, and the decoration $H_{ij}$ indicates that the multiplication goes from the tensor-product with the source to the target; so for example, the top row is the multiplication 
\begin{equation}
H^0(E\otimes L_1^n) \otimes H_{11}  = H^0(E\otimes L_1^n)\otimes H^0(L_1^{m-n})\to H^0(E\otimes L_1^m).\label{eq:toprow}
\end{equation}

The first point we make is that it is possible to choose $m\gg n\gg 0$ sufficiently large so that this diagram ``recovers'' the sheaf $E$ , up to isomorphism (in fact one can recover $E$ from just the top row \eqref{eq:toprow} of the diagram, as is done in the classical construction of the Quot scheme). More conceptually, we can think of $\Phi(E)$ as a representation of the quiver $Q$ that has 4 vertices and $\dim(H_{ij})$ arrows between the appropriate vertices.  As such, the association $E\mapsto\Phi(E)$ is a functor that allows us to embedd the category of (suitably regular) sheaves into the category of representations of $Q$.

\begin{theorem}[Embedding regular sheaves into a category of quiver representations]\label{thm:categoryembedding}
For $m\gg n$ the following holds:  for any sheaves $E,F$ (of given topological type) that are $n$-regular with respect to both $L_{1}$ and $L_{2}$, the map
$$\Phi_{*}\colon \Hom(E,F) \to \Hom(\Phi(E),\Phi(F))$$ 
is an isomorphism.  
\end{theorem}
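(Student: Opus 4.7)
The map $\Phi_*$ is well-defined, since any sheaf morphism $f\colon E \to F$ induces linear maps $H^0(E \otimes L_i^j) \to H^0(F \otimes L_i^j)$ that are compatible with multiplication by sections of the intermediate line bundles, and these assemble into a morphism of quiver representations. I will prove bijectivity by treating injectivity and surjectivity separately. For injectivity, suppose $\Phi_*(f) = 0$. Then in particular the induced map $H^0(E \otimes L_1^m) \to H^0(F \otimes L_1^m)$ is zero, and $n$-regularity of $E$ with respect to $L_1$ (together with $m \geq n$) ensures that $E \otimes L_1^m$ is globally generated. The evaluation $H^0(E \otimes L_1^m) \otimes \mathscr{O}_X \twoheadrightarrow E \otimes L_1^m$ is then surjective, and the commutative square with the analogous evaluation for $F$ forces $f \otimes \id_{L_1^m} = 0$, whence $f = 0$.

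For surjectivity, given $\alpha\colon \Phi(E) \to \Phi(F)$, I will construct $f\colon E \to F$ starting from the single component $\alpha_{1,n}$. For $m \gg n \gg 0$, the kernel $K_E$ of the surjective evaluation $\varepsilon_E\colon H^0(E \otimes L_1^n) \otimes L_1^{-n} \to E$ is itself such that $K_E \otimes L_1^m$ is globally generated and $H^1(K_E \otimes L_1^m) = 0$, producing an exact presentation
\[
H^0(K_E \otimes L_1^m) \otimes L_1^{-m} \longrightarrow H^0(E \otimes L_1^n) \otimes L_1^{-n} \xrightarrow{\varepsilon_E} E \to 0,
\]
in which $H^0(K_E \otimes L_1^m)$ is identified with $\ker(\mathrm{mult}_{11})$ via the long exact cohomology sequence of the defining sequence of $K_E$. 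Composing $\alpha_{1,n} \otimes \id_{L_1^{-n}}$ with $\varepsilon_F$ yields a candidate $\widetilde{f}\colon H^0(E \otimes L_1^n) \otimes L_1^{-n} \to F$; commutativity of $\alpha$ with $\mathrm{mult}_{11}$, combined with the analogous presentation of $F$, forces $\widetilde{f}$ to vanish on the image of $H^0(K_E \otimes L_1^m) \otimes L_1^{-m}$, so it factors through the desired morphism $f\colon E \to F$.

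It remains to verify $\Phi_*(f) = \alpha$. Setting $\beta := \Phi_*(f) - \alpha$, one has $\beta_{1,n} = 0$ by construction. For $m \gg n$ all four multiplication maps $\mathrm{mult}_{ij}$ are surjective (by Serre vanishing applied to kernels of the evaluation maps of $F$, using ampleness of each $L_i^{-n} \otimes L_j^m$). Compatibility of $\beta$ with $\mathrm{mult}_{11}$ and $\mathrm{mult}_{12}$ then yields $\beta_{1,m} = \beta_{2,m} = 0$; finally, compatibility with $\mathrm{mult}_{22}$ shows that for every $v \in H^0(E \otimes L_2^n)$ one has $\beta_{2,n}(v) \cdot s = 0$ in $H^0(F \otimes L_2^m)$ for all $s \in H^0(L_2^{m-n})$, and base-point-freeness of $L_2^{m-n}$ forces $\beta_{2,n}(v)$ to vanish pointwise, hence $\beta_{2,n} = 0$.

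The principal obstacle is the need to choose $n$ and $m$ uniformly over the (bounded) family of $n$-regular sheaves of the given topological type, simultaneously ensuring $n$-regularity with respect to both $L_1$ and $L_2$, the $H^1$-vanishings of the relevant evaluation kernels at level $m$, and surjectivity of all four $\mathrm{mult}_{ij}$. Each of these is a standard consequence of Castelnuovo--Mumford regularity and Serre vanishing, but assembling them uniformly across the family is precisely where the quantitative content of the theorem resides; once this uniform setup is established, the quiver-representation argument above is essentially formal.
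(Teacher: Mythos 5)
Your argument is correct and matches the paper's: injectivity via global generation of $E\otimes L_1^n$, surjectivity via the standard Kronecker-module presentation to reconstruct $f$ from $\alpha_{1,n}$, followed by a diagram chase that --- exactly as the paper's remark emphasises --- crucially uses the diagonal arrow $H_{12}$ (surjectivity of $\mathrm{mult}_{12}$) to propagate the identity $\Phi_{*}(f)=\alpha$ from the first row of the quiver to the second, with all regularity and vanishing statements made uniform over the bounded family. The only cosmetic slips are that the $H^1$-vanishing needed for surjectivity of $\mathrm{mult}_{12}$ on the source side concerns the evaluation kernel of $E$ rather than of $F$, and that the final step ``$\beta_{2,n}(v)\cdot s=0$ for all $s$ implies $\beta_{2,n}(v)=0$'' is more safely phrased on stalks, where multiplication by a section of $L_2^{m-n}$ not vanishing at the given point is an isomorphism.
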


This in particular implies that $E\mapsto \Phi(E)$ is injective on (isomorphism classes of) objects, and thus $E$ can be recovered from $\Phi(E)$.

\begin{remark} In the setup of Theorem~\ref{thm:categoryembedding}, the regularity assumption and the assumption $m\gg n$ imply, among other things, that $E\otimes L_{i}^{n}$ and $E\otimes L_{j}^{m}$ are globally generated, and that the maps $H^{0}(E\otimes L_{i}^{n})\otimes H_{ij}\to H^{0}(E\otimes L_{j}^{m})$ are surjective.  From this, injectivity of $\Phi_{*}$ is immediate, for if $\Phi_{*}(f)=0$ for some $f\colon E\to F$ then certainly $f_{*}\colon H^{0}(E\otimes L_{1}^{n})\to H^{0}(F\otimes L_{1}^{n})$ is zero, and thus $f=0$. Surjectivity can be proved with a lengthy diagram chase, which again uses the regularity assumption on $E$ and $F$, and depends in a crucial way on the fact that the quiver $Q$ contains diagonal arrows relating the two rows; see \cite[Sect.~5]{GRTI} for an alternative proof using pushout diagrams.
\end{remark}

Using the embedding $\Phi$, we can now employ earlier work of King \cite{King} that uses Geometric Invariant Theory to produce moduli spaces of representations of a given quiver.  To do so, King introduces a notion of stability for quiver representations, which for our purpose we present as follows: A representation $R$ of $Q$ is given by a diagram of linear maps of the form
\[\begin{xymatrix}{
   V_{1} \ar[rr]|{H_{11}}\ar[rrd]|<<<<<{H_{12}} & & W_{1} \\
 V_{2}  \ar[rr]|{H_{22}} \ar[rru]|<<<<<{H_{21}}  & &  W_{2}
}
  \end{xymatrix}
\]
for some finite dimensional vector spaces $V_{j}$ and $W_{j}$ (which to avoid pathologies we assume to be non-trivial).  Given non-negative rational numbers $\sigma_{1},\sigma_{2}$ we define the \emph{slope} of $R$ to be the quantity 
$$ \mu(R):=\frac{ \sigma_{1}\dim V_{1} + \sigma_{2}\dim V_{2}}{ \sigma_{1}\dim W_{1} + \sigma_{2}\dim W_{2}},$$
and $R$ is said to be (semi)stable if $\mu(R')< (\le)\mu(R)$ for all proper non-trivial subrepresentations $R'$ of $R$. 

 We note in passing that \cite{ConsulKing} uses a smaller quiver, the so-called \emph{Kronecker quiver} $$\bullet \overset{H}{\longrightarrow} \bullet$$ to construct the Gieseker moduli space. What makes our quiver more interesting is the existence of a non-trivial space $\{(\sigma_1, \sigma_2) \mid \sigma_j \in \mathbb{R}_{\geq 0}\} $ of stability conditions that one can vary (whereas for the Kronecker-quiver there is essentially only one), and it is this aspect that allows us to study the change-of-polarisation problem using this machinery. 

Now, King proved that there exists a projective moduli space $\mathcal Q$ of semistable representations of $Q$ (where the $V_{i}$ and $W_{i}$ are required to have the correct dimension determined by the topological type of the sheaves in question). Then, the moduli space of sheaves $\mathcal M_{\sigma}$ can be constructed as a subscheme of $\mathcal Q$, so long as the stability notions for sheaves and representations match up, which is the content of the next statement, see \cite[Thm.~8.1]{GRTI}:

\begin{theorem}[Comparison of semistability]\label{thm:semistabilitycomparison}
For all integers $m\gg n\gg p\gg 0$ the following holds:  any torsion-free sheaf $E$ on $X$ (of given topological type) is multi-Gieseker-semistable (with respect to $L_{1},L_{2},\sigma_{1},\sigma_{2}$) if and only if (a) it $p$-regular with respect to $L_{1}$ and $L_{2}$, and (b) the representation $\Phi(E)$ is semistable. In addition, if each $\sigma_j$ is strictly positive, $\Phi(E)$ is compatible with the natural Jordan-H\"older filtrations on sheaves and quiver representations, respectively.
\end{theorem}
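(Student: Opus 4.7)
The plan is to follow the template introduced by \'Alvarez-C\'onsul and King, adapted to the 4-vertex quiver $Q$ that encodes two polarisations. Part (a) of the theorem is essentially automatic: generalising the boundedness results of Section~2 to the multi-Gieseker setting, the family of multi-Gieseker-semistable torsion-free sheaves of fixed topological type is bounded, so a single threshold $p$ makes every such $E$ simultaneously $p$-regular with respect to both $L_1$ and $L_2$. All the content thus lies in the equivalence of multi-Gieseker-semistability with (b), which is achieved by matching proper subsheaves of $E$ with proper subrepresentations of $\Phi(E)$.

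For the direction ``$\Phi(E)$ semistable implies $E$ multi-Gieseker-semistable'' I would argue as follows. Given a non-trivial proper subsheaf $F\subset E$, replace it by its saturation; boundedness of the family of would-be destabilising subsheaves then permits us to choose $n$ and $m$ large enough so that $F$ is also $n$- and $m$-regular with respect to both $L_i$. Under this regularity, $\dim H^0(F\otimes L_i^k) = P^{L_i}_F(k)$ for $k\in\{n,m\}$, so $\Phi(F)$ sits inside $\Phi(E)$ as a subrepresentation with dimension vector computed by Hilbert polynomials; the slope inequality $\mu(\Phi(F))\le\mu(\Phi(E))$ then translates, after clearing denominators, into exactly the multi-Gieseker inequality evaluated at $n$ and $m$, which by the leading-order asymptotics in the regime $m\gg n\gg 0$ forces the corresponding inequality for the reduced $\sigma$-Hilbert polynomials at all sufficiently large degrees.

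The harder direction ``$E$ multi-Gieseker-semistable implies $\Phi(E)$ semistable'' requires one to reverse this correspondence. To a subrepresentation $R'=(V'_1,V'_2,W'_1,W'_2)\subset\Phi(E)$ I would assign the subsheaf $F\subset E$ generated by the images of $V'_1\otimes L_1^{-n}\oplus V'_2\otimes L_2^{-n}\to E$. The technical heart of the proof is to show, uniformly over a bounded family of such candidate $F$'s, that $V'_i\subseteq H^0(F\otimes L_i^n)$ while $W'_j$ is forced (up to controlled error) to contain $H^0(F\otimes L_j^m)$ via the multiplication maps of the quiver, so that $\mu(R')$ is sandwiched between quantities that, by multi-Gieseker-semistability of $E$ and the regularity hierarchy $m\gg n\gg p$, are bounded above by $\mu(\Phi(E))$. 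This is the main obstacle: arbitrary subrepresentations need not a priori arise from subsheaves, and one must show that passing to the associated $F$ loses no destabilising information. Here the surjectivity statements underpinning Theorem~\ref{thm:categoryembedding} (which themselves rely on the diagonal arrows $H_{12}, H_{21}$ in $Q$) are exactly what is needed to pin down $W'_j$ in terms of $F$.

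The Jordan-H\"older compatibility under the assumption $\sigma_1,\sigma_2>0$ then follows by iterating the equivalence. Strict positivity of the $\sigma_j$ guarantees that a $\sigma$-slope equality on some subsheaf forces equality of the full asymptotic multi-Gieseker expression, so the $\Phi$-image of a Jordan-H\"older filtration of $E$ is a filtration of $\Phi(E)$ by semistable subrepresentations of equal slope and hence a Jordan-H\"older filtration there; the converse direction recovers a JH filtration of $E$ by associating subsheaves to the given subrepresentations as in the hard direction above.
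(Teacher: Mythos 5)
Your proposal follows essentially the same route as the paper: translate the multi-Gieseker inequality into the quiver slope inequality via $h^0=\chi$ under uniform regularity, and reduce testing semistability of $\Phi(E)$ to subrepresentations of the form $\Phi(F)$ by attaching a subsheaf to an arbitrary subrepresentation. The only ingredient you leave implicit is the one the paper names as carrying the technical weight, namely a tailored Le Potier--Simpson estimate, which is what controls $h^0(F\otimes L_i^n)$ over the \emph{unbounded} family of saturated subsheaves and makes your ``sandwiching'' of $\mu(R')$ rigorous.
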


\begin{remark}
Observe that in the final statement of Theorem \ref{thm:semistabilitycomparison} we require the $\sigma_j$ to be strictly positive.  As a consequence, it is not possible to set all but one of them to be zero in order to directly recover Gieseker-stability with respect to $L_j$ in this setup. Hence, extra steps are needed to prove Theorem~\ref{thm:variation_on_threefolds} as an application of Theorem~\ref{thm:semistabilitycomparison}. We plan to revisit this issue in the future and to show that these extra steps can be avoided.
\end{remark}

The essence of the idea of the proof can be seen as follows.  Recall that by definition a torsion-free sheaf $E$ is multi-Gieseker-semistable if and only if for all non-trivial proper subsheaves $F$ of intermediate rank we have
\begin{equation*}
 \frac{\sum_{j} \sigma_j \chi(F\otimes L_{j}^n)}{\rank(F)} \le  \frac{\sum_{j} \sigma_j \chi(E\otimes L_{j}^n)}{\rank(E)}\quad \text{ for all }n\gg 0.
 \end{equation*}
Using that the rank is the leading order term of the Hilbert-polynomial and vanishing of higher cohomology, one sees easily that this inequality is equivalent to
\begin{equation*}
 \frac{\sum_{j} \sigma_j h^{0}(F\otimes L_{j}^n)}{\sum_{j} \sigma_j h^{0}(F\otimes L_{j}^m)} \le  \frac{\sum_{j} \sigma_j h^{0}(E\otimes L_{j}^n)}{\sum_{j} \sigma_j h^{0}(E\otimes L_{j}^m)}\quad \text{ for all }m\gg n\gg 0.
 \end{equation*}
But this precisely says that $\mu(\Phi(F))\le \mu(\Phi(E))$.    Thus, what remains is (a) to show that one can take $m,n$ uniformly over all sheaves of interest and (b) that to test for stability of $\Phi(E)$ it is sufficient to consider only subrepresentations of the form $\Phi(F)$ for some subsheaf $F\subset E$.  Both points are rather technical to establish, based on a tailored version of the Le Potier-Simspon estimate, and we refer the reader to \cite[Sects.~7 and 8]{GRTI} for details.

To sum up the discussion: those representations of $Q$ that are of the form $\Phi(E)$ for some sufficiently regular semistable sheaf $E$ form a locally closed subscheme of the appropriate space $\mathcal{R}$ of representations of $Q$, which in addition is saturated with respect to the relevant GIT-quotient by Theorem~\ref{thm:semistabilitycomparison}.  So, one concludes that $\mathcal M_{\sigma}$ exists as a quasi-projective scheme, which in fact is projective by an appropriate version of Langton's Theorem. 

The existence of Thaddeus-flips between two such moduli spaces now comes from general theory, since we have used \emph{the same embedding} (Theorem \ref{thm:categoryembedding}) \emph{for all stability parameters} $\sigma$. As all the moduli spaces involved are constructed via GIT from $\mathcal{R}$, general statements from variation of GIT \cite{Thaddeus, DolgachevHu} apply to our fixed master space to establish the claim of Theorem~\ref{thm:variation_on_threefolds}.

\subsection{Semistability with respect to K\"ahler polarisations}
As we have seen in the introduction, the change-of-polarisation problem naturally leads to question (Q3) concerning the existence of moduli spaces of sheaves that are Gieseker-semistable with respect to a non-rational class in $\mathrm{Amp}(X)_\mathbb{R}$. Using the machinery of multi-Gieseker moduli spaces discussed above, we are able to give a positive answer to question (Q3) in \cite[Thm.~11.6]{GRTI}:

 \begin{theorem}[Projective moduli spaces for $\omega$-semistable sheaves]\label{thm:kaehlermodulithreefolds}
  Let $\omega\in N^1(X)_{\mathbb R}$ be a real ample class on a smooth projective threefold $X$. Then, there exists a projective moduli space $\mathcal M_{\omega}$ of torsion-free sheaves of fixed topological type that are semistable with respect to $\omega$.  This moduli space contains an open set consisting of points representing isomorphism classes of stable sheaves, while points on the boundary correspond to $S$-equivalence classes of strictly semistable sheaves.  
\end{theorem}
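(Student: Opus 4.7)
My plan is to realise $\mathcal M_{\omega}$ as a multi-Gieseker moduli space and then invoke Theorem~\ref{thm:multiGieseker_moduli_exist}. The first step is to secure boundedness of the family in question: setting $\gamma:=\omega^2\in\mathrm{Pos}(X)_{\mathbb R}$, the natural extension to real ample classes of Lemma~\ref{lemma:slope_Gieseker_relation} implies that every $\omega$-Gieseker-semistable torsion-free sheaf of the fixed topological type is slope-semistable with respect to $\gamma$. Since $\gamma$ lies in $\mathrm{Pos}(X)_{\mathbb R}$, Theorem~\ref{thm:K^+} furnishes the required boundedness, so from here on only finitely many Chern characters of potential subsheaves need to be tested.

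The heart of the argument is to choose ample integer line bundles $L_1,\ldots,L_k$, auxiliary line bundles $B_1,\ldots,B_k$, and non-negative real numbers $\sigma_1,\ldots,\sigma_k$ such that $\omega$-Gieseker-(semi)stability coincides with $\sigma$-multi-Gieseker-(semi)stability with respect to $(L_j,B_j)$ on the bounded family. By Riemann--Roch both notions are governed by polynomials of degree at most $3$ in the twisting parameter whose coefficients depend $\mathbb R$-linearly on $\ch(E)/\rk(E)$: the $\omega$-coefficients are built from intersections with the classes $\omega^k$, while the multi-Gieseker coefficients are built from intersections with $\ch(B_j)\,c_1(L_j)^k$. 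The matching thus reduces to a finite linear system in the $\sigma_j$ (with the $L_j$ and $B_j$ serving to tune the coefficients). The principal subtlety is the non-linearity of the map $\omega\mapsto\omega^2$, which prevents a na\"ive expression of $\omega^2$ as $\sum_j\sigma_j c_1(L_j)^2$ with positive coefficients; this is precisely where Proposition~\ref{prop:a_cic_in_every_chamber} comes to the rescue, since it ensures that the chamber of $\omega^2$ in $\mathrm{Pos}(X)_{\mathbb R}$ contains a rational class of complete-intersection type $A\cdot B$ that can be produced inside the multi-Gieseker formalism. The auxiliary twists $B_j$ then provide the further degrees of freedom needed to match the sub-leading coefficients, and positivity of the $\sigma_j$ can be arranged because $\omega$ lies in the interior of the ample cone.

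Once this identification of stability conditions is in place, $\mathcal M_\omega$ is canonically identified with $\mathcal M_\sigma$, which exists as a projective scheme by Theorem~\ref{thm:multiGieseker_moduli_exist}. The open locus parametrising isomorphism classes of stable sheaves and the description of boundary points by $S$-equivalence classes transfer directly from the multi-Gieseker situation, yielding the geometric statements claimed in the theorem.

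The main obstacle will be the matching step: producing a triple $(L_j,B_j,\sigma_j)$ with \emph{non-negative} $\sigma_j$ that realises $\omega$-Gieseker-stability exactly on the bounded family. Boundedness is what reduces this to a finite-dimensional linear-algebraic problem, and Proposition~\ref{prop:a_cic_in_every_chamber} is the critical geometric ingredient allowing one to circumvent the non-linearity pathologies emphasised throughout the paper, whence an abstract existence argument (supplemented by a genericity perturbation for the $B_j$ and a positivity check using the fact that $\omega$ is in the interior of $\mathrm{Amp}(X)_{\mathbb R}$) finishes the job.
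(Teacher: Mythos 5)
Your overall strategy --- identifying $\omega$-Gieseker-semistability with multi-Gieseker-semistability for suitable data $(L_j,B_j,\sigma_j)$ and then invoking Theorem~\ref{thm:multiGieseker_moduli_exist} --- is exactly the route the paper takes, and reducing the matching problem to a comparison of Riemann--Roch coefficients is also on target. However, the tool you invoke for the crucial matching step is the wrong one. Proposition~\ref{prop:a_cic_in_every_chamber} concerns the chamber structure for \emph{slope}-semistability on $\Pos(X)_{\mathbb R}$: replacing $\omega^2$ by a complete-intersection class $A\cdot B$ in the same slope-chamber preserves slope-(semi)stability but says nothing about the full reduced polynomial $p_E(m)=\frac{1}{\rk(E)}\int_X \ch(E)e^{m\omega}\Todd(X)$, which is what $\omega$-Gieseker-stability tests; moreover a product $A\cdot B$ of two distinct ample classes is not of the form $\sum_j\sigma_j c_1(L_j)^2$ with $\sigma_j\ge 0$, so it is not clear it ``can be produced inside the multi-Gieseker formalism'' at all (writing $2A\cdot B=(A+B)^2-A^2-B^2$ requires negative coefficients). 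The ingredient the paper actually uses is Proposition~\ref{P}: since $\Pos(X)_{\mathbb R}$ is \emph{open} in $N_1(X)_{\mathbb R}$, the class $\omega^2$ is an interior point of the cone generated by the squares $c_1(L)^2$ of integral ample classes, hence \emph{is} a finite positive combination $\sum_j\sigma_j c_1(L_j)^2$ --- precisely the ``naive expression'' you assert is prevented by the non-linearity. The non-linearity of $\omega\mapsto\omega^2$ is the reason for passing from $\Amp(X)_{\mathbb R}$ to $N_1(X)_{\mathbb R}$ in the first place, not an obstruction to this decomposition; once the leading curve-class coefficient is matched in this way, the lower-order coefficients are tuned using the $B_j$, as you suggest.

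A second omission: the $\sigma_j$ produced by this matching are in general irrational (they must be, since $\omega$ is), whereas the GIT construction underlying $\mathcal M_\sigma$ requires rational stability parameters. The paper's second step is therefore to use the \emph{linear rational} chamber structure on $\sigma$-space (Theorem~\ref{thm:multiGieseker_chamber_structure}) to perturb $\sigma$ to a rational vector in the same chamber without changing the notion of (semi)stability or the Jordan--H\"older filtrations. This is where the rationality of the walls in $\sigma$-space --- the payoff of the whole change of variables --- enters, and it should appear explicitly in your argument rather than being absorbed into a direct appeal to Theorem~\ref{thm:multiGieseker_moduli_exist} for real $\sigma$.
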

\begin{remark}
It is likely that the assumption that $X$ has dimension three is not really necessary. For partial results in dimensions greater than three, the reader is referred to \cite[Sect.~5]{GRTII}. We emphasise that the above moduli spaces are \emph{projective} despite us using a real class to define the stability condition, contrary to the expectation expressed for example in \cite[p.~217, after Main Theorem]{Schmitt}. Moreover, as Schmitt's example \cite[Ex.~1.1.5]{Schmitt} shows, in general these moduli spaces are new in the sense that they are not at the same time moduli spaces of sheaves that are Gieseker-semistable with respect to some ample class on $X$. We note that on the other hand the existence problem for moduli spaces of Gieseker-semistable sheaves on arbitrary compact K\"ahler manifolds is wide open and that the techniques used here rely in a crucial way on the projectivity of $X$.
\end{remark}

The proof of Theorem~\ref{thm:kaehlermodulithreefolds} consists of two steps: first, using Proposition~\ref{P} we show that for every class $[\omega] \in \mathrm{Amp}(X)_\mathrm{R}$ there exist line bundles $L_{1},\ldots,L_{j_{0}}$ and (possibly irrational) positive numbers $\sigma_{1},\ldots,\sigma_{j_{0}}$ such that the reduced ``Hilbert polynomial'' $$p_E(m) = \frac{1}{\rank(E)}\int\nolimits_X ch(E) e^{m\omega} \Todd(X)$$ with respect to $[\omega]$ is equal to the reduced multi-Hilbert polynomial with respect to the data $L_{j},\sigma_{j}$. We then observe that since the chamber structure for multi-Gieseker semistability on $\mathbb{R}_{\geq 0}^{j_0}$ is given by linear \emph{rational} walls, see Theorem~\ref{thm:multiGieseker_chamber_structure}, we can perturb the $\sigma_{j}$ to be rational without changing the notion of stability.  Then, the moduli space $\mathcal M_{\sigma}$ constructed in the previous section is the moduli space for $[\omega]$-semistable sheaves.

\section{The multi-Gieseker-to-Uhlenbeck morphism}
In this final section, we establish a connection between the two moduli problems considered above by proving the existence of a multi-Gieseker-to-Uhlenbeck morphism. 

Let $X$ be a projective manifold of dimension $n$. Fixing ample line bundles $L_{1},\ldots,L_{j_{0}}$, recall from Lemma~\ref{lemma:slope_Gieseker_relation} that, if $\sigma=(\sigma_1,\ldots,\sigma_{j_0})$ with $\sigma_j \in \mathbb{Q}_{\geq 0}$ not all zero, any $\sigma$-semistable torsion-free coherent sheaf $E$ is slope-semistable with respect to $ \gamma = \sum\nolimits_j \sigma_j c_1(L_j)^{n-1} \in N_1(X)_\mathbb{\mathbb{Q}}$. For the remainder of the section, we suppose that $\gamma$ belongs to $\mathrm{Pos}(X)_\mathbb{R}$. 

Fixing topological invariants, a moduli space $\mathcal{M}_\sigma$ for $\sigma$-semistable sheaves exists by Theorem~\ref{thm:multiGieseker_moduli_exist}, and we denote by $\mathcal{M}_\sigma(\Lambda)$ the subscheme of $\mathcal{M}_\sigma$ that parametrises sheaves with determinant line bundle $\Lambda$. On the other hand, it follows from Proposition~\ref{prop:a_cic_in_every_chamber} and Theorem~\ref{thm:slopemoduli} that there exists a semi-normal moduli space $\mathcal{M}^{\mu\text{-}ss}(\Lambda)$ for sheaves of the chosen topological type with  determinant $\Lambda$ that are slope-semistable with respect to $\gamma$. As in the classical case \cite{LePotierDonaldsonUhlenbeck, JunLiDonaldsonUhlenbeck}, these moduli spaces are related as follows:

\begin{proposition}[The multi-Gieseker-to-Uhlenbeck morphism]\label{prop:Gieseker_to_DU_morphism}
 In the situation above, there exists a natural morphism  
\[\Psi_{DU}: \mathcal{M}_\sigma(\Lambda)^{w\nu} \to \mathcal{M}^{\mu\text{-}ss}(\Lambda),\]
mapping an $S$-equivalence class of multi-Gieseker-semistable sheaves represented by the Gie\-se\-ker-polystable sheaf $E$ to $\Phi_{E}(pt.)$, where $\Phi_E: \{pt.\} \to \mathcal{M}^{\mu\text{-}ss}(\Lambda)$ is the classifying map associated with the trivial family with fibre $E$ over a point. 
If $\mathcal{C}$ is a component of $\mathcal{M}_\sigma(\Lambda)$ whose general point represents a slope-stable locally free sheaf, then $\mathcal{C}$ is mapped birationally onto its image, which is a component of $\mathcal{M}^{\mu\text{-}ss}(\Lambda)$.
\end{proposition}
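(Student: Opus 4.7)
The plan is to construct $\Psi_{DU}$ by first producing a classifying morphism at the level of the GIT parameter space used in the construction of $\mathcal{M}_\sigma(\Lambda)$, and then descending along the good quotient.

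Recall that $\mathcal{M}_\sigma(\Lambda)$ is obtained as a good GIT quotient $R \hq G$, where $R$ is a locally closed $G$-invariant subscheme of a suitable Quot scheme carrying a $G$-equivariant universal family $\mathscr{F}$ of $\sigma$-semistable sheaves with determinant $\Lambda$ on $R\times X$. By Lemma~\ref{lemma:slope_Gieseker_relation}, every fibre of $\mathscr{F}$ is slope-semistable with respect to $\gamma$; hence $\mathscr{F}$ is automatically a flat family of slope-$\gamma$-semistable sheaves. Since slope-$\gamma$-semistability for fixed topological type depends only on the chamber of $\gamma$ in $\mathrm{Pos}(X)_{\mathbb{R}}$, and since by Proposition~\ref{prop:a_cic_in_every_chamber} this chamber contains a complete intersection class $[H_1]\cdots [H_{n-1}]$ of ample integral divisors, Theorem~\ref{thm:slopemoduli} furnishes the moduli space $\mathcal{M}^{\mu\text{-}ss}(\Lambda)$ together with its modular property. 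Pulling $\mathscr{F}$ back to $R^{w\nu}\times X$ and invoking this property yields a classifying morphism
$$\tilde\Psi\colon R^{w\nu}\longrightarrow \mathcal{M}^{\mu\text{-}ss}(\Lambda).$$

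To descend $\tilde\Psi$ to the good quotient, I check that it is $G$-invariant (for the trivial action on the target) and that it sends any two points parametrising $\sigma$-$S$-equivalent sheaves to the same image. The $G$-invariance is immediate, since isomorphic sheaves give identical classifying maps. For the $\sigma$-$S$-equivalence compatibility, suppose $E$ and $F$ are $\sigma$-$S$-equivalent, so that $gr^\sigma E \cong gr^\sigma F$. Each $\sigma$-stable factor of $gr^\sigma E$ is in turn slope-semistable by Lemma~\ref{lemma:slope_Gieseker_relation}, hence admits a slope-Jordan--H\"older refinement with slope-stable factors. Refining the $\sigma$-JH filtration in this way produces a slope-JH filtration of $E$, and doing the same for $F$ gives slope-JH filtrations whose graded objects are isomorphic. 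Consequently $(gr^\mu E)^{**} \cong (gr^\mu F)^{**}$ and the support Chow-cycles $C_E,C_F$ agree, so by the description of points of $\mathcal{M}^{\mu\text{-}ss}(\Lambda)$ recalled in Section~\ref{subsect:geometry_of_slopemoduli} the sheaves $E$ and $F$ yield the same point. The functoriality of the semi-normalisation together with the universal property of the good quotient then produces the desired morphism $\Psi_{DU}\colon \mathcal{M}_\sigma(\Lambda)^{w\nu} \to \mathcal{M}^{\mu\text{-}ss}(\Lambda)$; the explicit description on points follows from applying the classifying morphism to the trivial family over a point representing a $\sigma$-polystable $E$.

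For the final assertion, let $\mathcal{C} \subset \mathcal{M}_\sigma(\Lambda)$ be a component whose general point represents a slope-stable locally free sheaf, and let $U \subset \mathcal{C}$ be the (open and dense) locus of such sheaves. Over $U$, slope-stability implies $\sigma$-stability (Lemma~\ref{lemma:slope_Gieseker_relation}), and for slope-stable sheaves the notions of isomorphism, $\sigma$-$S$-equivalence, and slope-$S$-equivalence all coincide. Thus $\Psi_{DU}|_U$ is injective with image $V$ contained in the slope-stable, locally free locus of $\mathcal{M}^{\mu\text{-}ss}(\Lambda)$. Both $U$ and $V$ are canonically open subschemes of the Altman--Kleiman and Kosarew--Okonek moduli space of simple bundles, and under these identifications $\Psi_{DU}|_U$ is the identity; hence it is an isomorphism and $\Psi_{DU}|_\mathcal{C}$ is birational onto its image, which is therefore a component of $\mathcal{M}^{\mu\text{-}ss}(\Lambda)$. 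The hard part of the argument is the $\sigma$-$S$-equivalence compatibility used in descent: it relies on a careful comparison of the two semistability filtrations together with the explicit point-set description of $\mathcal{M}^{\mu\text{-}ss}(\Lambda)$, whereas the remaining steps are formal applications of universal properties, descent for good quotients, and standard features of the moduli space of simple bundles.
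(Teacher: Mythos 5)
Your overall strategy --- pull the universal family $\mathscr{F}$ back to $R^{w\nu}$, obtain a $G$-invariant classifying morphism $\tilde\Psi\colon R^{w\nu}\to\mathcal{M}^{\mu\text{-}ss}(\Lambda)$ from the modular property of Theorem~\ref{thm:slopemoduli}, and descend along the good quotient --- is the same as the paper's. However, the step you dismiss as ``formal applications of universal properties, descent for good quotients'' is precisely where the technical content lies, and your proposal leaves it open. A $G$-invariant morphism out of $R^{w\nu}$ factors through a good quotient $R^{w\nu}\hq G$ \emph{once that quotient is known to exist}, but the target you need is $\mathcal{M}_\sigma(\Lambda)^{w\nu}=(R\hq G)^{w\nu}$; it is not formal that $R^{w\nu}\hq G$ exists, nor that it coincides with $(R\hq G)^{w\nu}$. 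The paper devotes Lemmas~\ref{lem:weaklynormalquotients} and~\ref{lem:quotients_and_seminormalisations_commute} to exactly this point: existence of the quotient of the semi-normalisation (via Alper--Easton), semi-normality of good quotients of semi-normal varieties (using Luna's theorem that good quotients remain categorical in the analytic category), and the identification of $R^{w\nu}\hq G$ with $(R\hq G)^{w\nu}$ via the homeomorphism property of semi-normalisation maps. Without some version of these lemmas, ``functoriality of the semi-normalisation'' only gives you a continuous factorisation, not a morphism of schemes.

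Your $S$-equivalence compatibility check has two problems. First, it is unnecessary: once $R^{w\nu}\hq G\cong(R\hq G)^{w\nu}$ is in place, the categorical quotient property of a good quotient already forces the $G$-invariant morphism $\tilde\Psi$ to be constant on the fibres of $R^{w\nu}\to R^{w\nu}\hq G$, i.e.\ on $S$-equivalence classes, with no need to compare Jordan--H\"older filtrations. Second, as written it is not justified: from $(gr^\mu E)^{**}\cong(gr^\mu F)^{**}$ and $C_E=C_F$ you conclude that $E$ and $F$ determine the same point of $\mathcal{M}^{\mu\text{-}ss}(\Lambda)$, but Section~\ref{subsect:geometry_of_slopemoduli} only states the contrapositive direction (distinct invariants give distinct points); the converse is asserted there only under connectedness of the Quot-to-Chow fibres, which fails in dimension at least four by Example~\ref{ex:Hilb-to-Chow}. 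This portion of your argument should therefore be deleted rather than repaired. Your treatment of the final birationality claim is essentially the paper's (an appeal to the separation properties of $\mathcal{M}^{\mu\text{-}ss}(\Lambda)$ on the slope-stable locally free locus) and is fine.
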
Before we give a proof of Proposition~\ref{prop:Gieseker_to_DU_morphism} in Section~\ref{subsect:GDU_proof} below, we first establish some general results concerning the interplay of good quotients and semi-normalisations needed for the proof.

\subsection{Good quotients and semi-normalisations}
In the following, by \emph{variety} we mean a reduced separated scheme of finite type over the complex numbers, in particular, a variety is potentially reducible.
\begin{lemma}\label{lem:weaklynormalquotients}
 Let $G$ be a complex reductive Lie group, and let $X$ be a semi-normal algebraic $G$-variety that admits a good quotient $\pi: X \to X\hq G$. Then, $X\hq G$ is semi-normal.
\end{lemma}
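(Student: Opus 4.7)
The plan is to prove that the semi-normalisation morphism $\nu\colon Y^{w\nu}\to Y$ of $Y:=X\hq G$ is an isomorphism by explicitly constructing a section of $\nu$. For this I will invoke the following standard characterisation of semi-normality, which is equivalent to the one recalled in the excerpt in the setting of algebraic varieties: a reduced scheme $Z$ is semi-normal if and only if every finite surjective universal homeomorphism $W\to Z$ with reduced source $W$ and inducing isomorphisms on all residue fields is itself an isomorphism.

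The first step is a base-change argument. Form the fibre product $\widetilde X:=X\times_Y Y^{w\nu}$ and take its reduction $\widetilde X_{\mathrm{red}}$. Since $\nu$ is a finite surjective universal homeomorphism with trivial residue-field extensions, all of these properties are preserved by base change along $\pi$, so the first projection $f\colon\widetilde X_{\mathrm{red}}\to X$ inherits them, and does so from a reduced source. Because $X$ is semi-normal, the characterisation above forces $f$ to be an isomorphism.

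Inverting $f$ and composing with the second projection $\widetilde X_{\mathrm{red}}\to Y^{w\nu}$ produces a morphism $\alpha\colon X\to Y^{w\nu}$ satisfying $\nu\circ\alpha=\pi$. Since $G$ acts trivially on $Y^{w\nu}$, the map $\alpha$ is automatically $G$-invariant, and the universal property of the good quotient $\pi$ yields a unique $\bar\pi\colon Y\to Y^{w\nu}$ with $\alpha=\bar\pi\circ\pi$. Then $\nu\circ\bar\pi\circ\pi=\pi$, and the uniqueness clause of the universal property forces $\nu\circ\bar\pi=\mathrm{id}_Y$, i.e.\ $\bar\pi$ is a section of $\nu$. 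Now $\nu$ is finite and hence separated, so $\bar\pi$ is a closed immersion; $\nu$ is a homeomorphism on underlying spaces, so $\bar\pi$ is surjective on points; and $Y^{w\nu}$ is reduced, so any closed immersion surjecting onto it must be an isomorphism. Hence $\bar\pi$, and consequently $\nu$, is an isomorphism, proving that $Y$ is semi-normal.

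The step requiring the most care is the first one: one must verify that the reduction $\widetilde X_{\mathrm{red}}$ of the base-changed scheme still satisfies all hypotheses of the semi-normality characterisation of $X$, in particular that triviality of residue-field extensions persists after passage to the reduction, which ultimately reduces to a short computation with fibres of the finite morphism $\nu$. The remainder of the argument is then formal, resting only on the universal property of good quotients and on the fact that a section of a separated morphism is a closed immersion.
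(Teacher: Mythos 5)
Your proof is correct, but it follows a genuinely different route from the one in the paper. You argue purely scheme-theoretically: you characterise semi-normality by the absence of non-trivial finite universal homeomorphisms with reduced source and trivial residue-field extensions, base-change the semi-normalisation $\nu\colon (X\hq G)^{w\nu}\to X\hq G$ along $\pi$, use semi-normality of $X$ to trivialise the resulting map, and then push the induced $G$-invariant morphism $\alpha\colon X\to (X\hq G)^{w\nu}$ through the categorical-quotient property of $\pi$ to manufacture a section of $\nu$, which forces $\nu$ to be an isomorphism. The paper instead works analytically: it passes to the normalisation $\mu\colon X^{\nu}\to X$, uses that $X^{\nu}\hq G$ is a normalisation of $X\hq G$, and verifies the analytic criterion for weak normality directly --- a holomorphic function on the preimage of an open set $U\subset X\hq G$ that is constant on fibres of the normalisation is pulled back to $X^{\nu}$, descends to $X$ by semi-normality of $X$, and then descends to $U$ because $\pi$ is a categorical quotient in the analytic category by Luna's theorem. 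Your approach buys independence from the complex-analytic theory (no appeal to Luna, no need for the good quotient of the normalisation, and it would work over any field), while the paper's argument is tailored to the analytic characterisation of semi-normality that it uses throughout the construction. Two points in your write-up deserve one more line each, though neither is a gap: the $G$-invariance of $\alpha$ is cleanest if you let $G$ act on $X\times_{X\hq G}(X\hq G)^{w\nu}$ through the first factor alone (legitimate since $\pi$ is invariant), so that $f$ is equivariant and the second projection is invariant; and the passage to the reduction preserves all hypotheses because $\widetilde X_{\mathrm{red}}\to\widetilde X$ is itself a finite surjective universal homeomorphism inducing isomorphisms on residue fields.
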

\begin{proof}
 Let $\mu: X^\nu \to X$ be the normalisation of $X$. The group $G$ acts on $X^\nu$ in such a way that $\mu$ is $G$-equivariant. Then, the good quotient $\pi^\nu: X^\nu \to X^\nu\hq G$ exists, and $X^\nu\hq G$ is  a normalisation of $X\hq G$.  Moreover, by the universal properties of $\pi^\nu$ and of the semi-normalisation $(X\hq G)^{w\nu} \to X \hq G$, we obtain the following commutative diagram:
\[\begin{xymatrix}{
   X^\nu  \ar[r]^{\pi^\nu} \ar[d]_\mu &   X^\nu\hq G \ar[d]_{\mu_G}\ar[r] &   (X\hq G)^{w\nu} \ar[ld] \\
 X   \ar[r]^\pi    &      X \hq G  . &  
}
  \end{xymatrix}
\]
The map $\pi^\nu$ maps $\mu$-fibres to $\mu_G$-fibres; in other words, 
\begin{equation}\label{eq:fibresgotofibres}
 \pi^{\nu}(\mu^{-1}(x)) \subset \mu_G^{-1}(\pi(x)) \quad \quad \text{for all } x \in X.
\end{equation}

Recall from \cite[Sect.~2.3]{GrebToma} that $X\hq G$ is semi-normal if and only if the complex space associated with $X\hq G$ is semi-normal. Let $U \subset X\hq G$ be an open subset (in the Euclidean topology), and let $f$ be holomorphic on $\mu_G^{-1}(U)$ and constant on $\mu_G$-fibres. Owing to the construction of the semi-normalisation, see e.g.~\cite[\S 15]{RemmertSCV_VII}, to prove our claim it suffices to show that $f$ descends to a holomorphic function on $U$.

The pull-back $(\pi^\nu)^*(f) =: \hat f$ of $f$ via $\pi^\nu$ is $G$-invariant and holomorphic on $(\pi^\nu)^{-1}(\mu_G^{-1}(U))$, and in addition constant on $\mu$-fibres by \eqref{eq:fibresgotofibres}. As $X$ is semi-normal by assumption, and as $\mu$ is $G$-equivariant, $\hat f$ descends to a $G$-invariant holomorphic function on $\pi^{-1}(U)$. Since $\pi$ is a categorical quotient also in the analytic category by \cite{LunaFonctionesInvariantes}, there exists a holomorphic function $\tilde f$ on $U$ such that $\pi^*(\tilde f) = \hat f$. By construction, $(\mu_G)^*(\tilde f) = f$, i.e., $f$ descends to $U$, as desired. 
\end{proof}

\begin{lemma}[Taking quotients commutes with seminormalisation]\label{lem:quotients_and_seminormalisations_commute}
  Let $G$ be a complex reductive Lie group, and let $X$ be an algebraic $G$-variety which admits a good quotient $\pi: X \to X\hq G$. Then, the good quotient $X^{w\nu}\hq G$ of the action of $G$ on the semi-normalisation $X^{w\nu}$ exists and $X^{w\nu}\hq G$ is isomorphic to the semi-normalisation $(X \hq G)^{w\nu}$.
\end{lemma}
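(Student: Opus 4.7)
The plan is to construct $X^{w\nu} \hq G$ directly by gluing together local invariant-theoretic quotients, use Lemma~\ref{lem:weaklynormalquotients} to conclude that the resulting scheme is semi-normal, and then identify it with $(X \hq G)^{w\nu}$ via the universal property of the semi-normalisation.

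First I would use functoriality of the semi-normalisation to lift the $G$-action from $X$ to $X^{w\nu}$ so that $\nu\colon X^{w\nu} \to X$ becomes $G$-equivariant. Since $\pi$ is affine, every affine open $U = \mathrm{Spec}(A^G) \subset X\hq G$ lifts to a $G$-invariant affine open $\pi^{-1}(U) = \mathrm{Spec}(A) \subset X$, and because $\nu$ is a finite morphism, $\nu^{-1}(\pi^{-1}(U)) = \mathrm{Spec}(A^{w\nu})$ is again affine. Taking invariants produces the local candidate quotient $\mathrm{Spec}((A^{w\nu})^G)$. Since semi-normalisation commutes with open immersions and taking $G$-invariants is natural in the ring, these local affine quotients glue to a global good quotient
\[\pi^{w\nu}\colon X^{w\nu} \longrightarrow Y := X^{w\nu}\hq G,\]
and Lemma~\ref{lem:weaklynormalquotients} applied to $X^{w\nu}$ shows that $Y$ is semi-normal.

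Next I would extract the natural morphism $\nu_G\colon Y \to X \hq G$ from the $G$-invariance of $\pi \circ \nu$ and the universal property of the good quotient, and then verify that it is both finite and bijective. Finiteness can be checked on the affine charts above: the inclusion $A^G \hookrightarrow (A^{w\nu})^G$ is a finite ring extension because $A \subset A^{w\nu}$ is finite and $G$ is reductive, so passing to invariants preserves module-finiteness. Bijectivity on closed points reduces to the statement that $\nu$ induces a bijection between the closed $G$-orbits in $X^{w\nu}$ and those in $X$; here one uses that $\nu$ is a $G$-equivariant set-theoretic bijection which is simultaneously continuous and finite (hence closed), so closed $G$-orbits correspond in both directions.

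Finally, since $\nu_G$ is a finite bijective morphism from the semi-normal space $Y$ to $X \hq G$, the universal characterisation of the semi-normalisation (see \cite[\S15]{RemmertSCV_VII}) yields a unique isomorphism $Y \xrightarrow{\cong} (X \hq G)^{w\nu}$ over $X \hq G$, giving the desired identification. The main technical point I expect to require care is the closed-orbit correspondence in the bijectivity step, where the topological and sheaf-theoretic aspects of the semi-normalisation interact most directly; once this is in hand, the rest of the argument is a routine gluing together with the universal property.
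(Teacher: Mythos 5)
Your argument is correct in outline, but it takes a genuinely different route from the paper's, so let me compare the two. For the existence of $X^{w\nu}\hq G$ the paper simply cites Alper--Easton on good quotients along $G$-invariant affine morphisms; your local gluing over affine charts of $X\hq G$ is essentially a proof of that result in the case at hand, so the two are equivalent here. The real divergence is in the identification with $(X\hq G)^{w\nu}$: instead of analysing $\nu_G$ algebraically, the paper argues topologically. Using that good quotients are semistable quotients in the sense of Heinzner--Migliorini--Polito and hence categorical quotients in the topological category, and that semi-normalisation maps are homeomorphisms, it shows that both $X^{w\nu}\hq G \to X\hq G$ and the induced map $X^{w\nu}\hq G \to (X\hq G)^{w\nu}$ are homeomorphisms, and then upgrades the latter to a biregular map by Remmert's characterisation of weakly normal complex spaces. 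Your route replaces this with finiteness and bijectivity of $\nu_G$ plus the maximality of the semi-normalisation among subintegral extensions; this keeps the argument on the algebraic side and avoids the analytic input from Luna and Heinzner--Migliorini--Polito, but two of your steps are stated a little too quickly. First, $A\subset A^{w\nu}$ being module-finite does not formally give that $(A^{w\nu})^G$ is module-finite over $A^G$ by ``preservation of module-finiteness'' --- note that $A$ itself is usually not finite over $A^G$; the correct argument is that the Reynolds operator shows every $G$-invariant element that is integral over $A$ satisfies a monic equation with coefficients in $A^G$, so $(A^{w\nu})^G$ is an integral extension of $A^G$ which is finitely generated as a $\C$-algebra (by reductivity), hence a finite module. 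Second, the universal characterisation of the semi-normalisation requires $\nu_G$ to be subintegral, i.e.\ finite, bijective \emph{and} inducing isomorphisms on residue fields; over $\C$ the last condition does follow from bijectivity by separability in characteristic zero, but it is exactly the hypothesis doing the work in the final step and should be stated. With these two points made precise, your proof is complete and is a valid alternative to the one in the paper.
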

\begin{proof}
Let $\mu: X^{w\nu} \to X$ be the semi-normalisation. The composition $\pi \circ \mu$ is a $G$-invariant affine morphism, and hence it follows from \cite[Thm.~1.1]{AlperEaston} that the good quotient $X^{w\nu}\hq G$ exists. It follows from \cite{LunaFonctionesInvariantes} that every good quotient is also a semistable quotient in the sense of \cite{HMP}. Additionally, we note that $X^{w\nu}\hq G$ is semi-normal by Lemma~\ref{lem:weaklynormalquotients} above. Based on the universal properties of semi-normalisations and categorical quotients, we obtain the following commutative diagram
\[\begin{xymatrix}
   { X \ar[d]_\pi& \ar[l] X^{w\nu} \ar[d] \\
X \hq G & X^{w\nu} \hq G \ar[l]_\psi \ar[ld]^\varphi\\
(X\hq G)^{w\nu}. \ar[u]& 
}
  \end{xymatrix}
\]

As semi-normalisation maps are homeomorphisms, and since semistable quotiens are also categorical quotients in the topological category \cite[p.~235]{HMP}, we conclude that both $\psi$ and $\varphi$ are homeomorphic. But then $\varphi$ is biregular by \cite[(15.4)]{RemmertSCV_VII}, as claimed.
\end{proof}

\subsection{Proof of Proposition~\ref{prop:Gieseker_to_DU_morphism}}\label{subsect:GDU_proof}
For this, we have to delve a little deeper into the construction of $\mathcal{M}_\sigma$: The moduli space $\mathcal{Q}$ of $\sigma$-semistable representations of the quiver $Q$ with appropriate dimension vector is the GIT-quotient of an open subset $V^{\sigma\text{-}ss} \subset V$ of a finite dimensional $G$-module, where $G$ is a product of general linear groups, cf.~\cite{King}. By Theorem~\ref{thm:semistabilitycomparison} above and \cite[Prop.~4.2]{ConsulKing}, there exists a locally closed $G$-invariant subscheme $R^{\sigma\text{-}ss}$ of $V^{\sigma\text{-}ss}$ with induced GIT-quotient $R^{\sigma\text{-}ss} \hq G = \mathcal{M}_\sigma \subset \mathcal{Q}$. It hence follows that $\mathcal{M}_\sigma(\Lambda)$ appears as a GIT-quotient $R\hq G$ of a $G$-invariant subscheme $R$ of $R^{\sigma\text{-}ss}$. 

Moreover, it is known \cite[Prop.~4.2]{ConsulKing} that $R^{\sigma\text{-}ss}$ and hence $R$ carries a universal $G$-equivariant flat family $\mathscr{F}$ of $\sigma$-semistable sheaves. Pulling this family back to the semi-normalisation $R^{w\nu}$ and using Lemma~\ref{lemma:slope_Gieseker_relation}, we obtain a $G$-equivariant flat family $\mathscr{F}^{w\nu}$ of $\gamma$-slope-semistable sheaves parametrised by the semi-normal scheme $R^{w\nu}$. The universal property of $\mathcal{M}^{\mu\text{-}ss}(\Lambda)$ hence yields a $G$-invariant morphism $R^{w\nu} \to \mathcal{M}^{\mu\text{-}ss}(\Lambda)$. As $R^{w\nu}\hq G \cong (R\hq G)^{w\nu} = \mathcal{M}_\sigma(\Lambda)^{w\nu}$ by Lemma~\ref{lem:quotients_and_seminormalisations_commute} above, this map descends to give the desired morphism $\Psi_{DU}: \mathcal{M}_\sigma(\Lambda)^{w\nu} \to \mathcal{M}^{\mu\text{-}ss}(\Lambda)$.  

Finally, the statement concerning birationality of $\Psi_{DU}$ on special components follows from the separation properties of $\mathcal{M}^{\mu\text{-}ss}(\Lambda)$ discussed in Section~\ref{subsect:geometry_of_slopemoduli} above.  \qed

\addtocontents{toc}{\protect\setcounter{tocdepth}{0}}

\enlargethispage{1cm}
\end{document}